\pgfplotsset{compat=1.18}
\titleformat{\section}[hang]
{\filcenter\bfseries}
{\thesection. }{0pt}{}
\newcommand{\B}{\mathcal{B}}
\newcommand{\E}{\mathcal{E}}
\newcommand{\M}{\mathcal{Y}}
\newcommand{\X}{\mathcal{X}}
\newcommand{\Y}{\mathcal{Y}}
\newcommand{\R}{\mathbb{R}}
\renewcommand{\L}{\mathcal{L}}
\DeclareMathOperator{\Vertical}{\kappa}
\newtheorem{theorem}{Theorem}
\newtheorem{proposition}{Proposition}[section]
\newtheorem{bemerkung}[theorem]{Remark}
\newtheorem{example}[theorem]{Example}
\newtheorem{definition}[proposition]{Definition}
\newtheorem{lemma}[proposition]{Lemma}
\newtheorem{corollary}[proposition]{Corollary}
\newcommand{\oset}[3][0ex]{%
  {\mathop{#3}\limits^{
    \vbox to#1{\kern-2\ex@
    \hbox{\!$#2$}}}}}
\newcommand{\NE}[1]{{\big\langle{#1}\big\rangle}}
\newcommand{\VT}[2]{\oset[-0.35ex]{\scriptscriptstyle#2}{V}_{\!\!#1}}
\newcommand{\VTprime}[2]{\oset[-0.6ex]{\scriptscriptstyle#2}{V'}_{\!\!\!\!#1}}
\setlist{leftmargin=2em}
\title{Newton's method for nonlinear mappings into vector bundles}
\author{Laura Weigl \& Anton Schiela}
\begin{document}
\maketitle
\begin{abstract}
We consider Newton's method for finding zeros of mappings from a manifold $\X$ into a vector bundle $\E$. In this setting a connection on $\E$ is required to render the Newton equation well defined, and a retraction on $\X$ is needed to compute a Newton update. We discuss local convergence in terms of suitable differentiability concepts, using a Banach space variant of a Riemannian distance. We also carry over an affine covariant damping strategy to our setting. Finally, we will illustrate our results by applying them to generalized non-symmetric eigenvalue problems and providing a numerical example.

\end{abstract}
\paragraph{Keywords:} Newton's method, Banach manifolds, vector bundles
\paragraph{AMS MSC2020:} 53-08, 58C15, 46T05, 49M15

\section{Introduction}
Newton's method is a central algorithm for the solution of nonlinear problems, but also, in its variants, a theoretical tool, e.g., in the proof of the implicit function theorem. Most of the literature consider Newton's method for finding zeros $F(x)=0$ for a differentiable mapping $F:X \to Y$, where $X$ and $Y$ are linear, normed, not necessarily finite dimensional spaces, in most cases Banach spaces. In this setting, a Newton step is defined as the solution $\delta x\in X$ of a linear operator equation together with an additive update:
\[
  F'(x)\delta x +F(x) =0,  \quad x_+ = x+\delta x.
\]
 Extensions of Newton's method to problems $F : \X \to Y$, where $\X$ is a Riemannian manifold and $Y$ still a linear space with the help of so called retractions $R_x:T_x\X \to \X$, are relatively straightforward, replacing the additive update by $x_+=R_x(\delta x)$. But also, Newton's method has been used to find stationary points of twice differentiable functions $f : \X \to \R$ \cite{absil2008optimization}, and to find zeros of vector fields $v\in \Gamma(\X)$ on Riemannian manifolds $\X$ (cf. e.g. \cite{gabay1982minimizing, ArgyrosHilout:2009:1,FernandesFerreiraYuan:2017:1,louzeiro2025inexact}) or one-forms \cite{smith1993geometric,smith1994optimization}. For an account on Newton's method for vector fields and further literature we refer to \cite[Section 6]{absil2008optimization}. In the semismooth context, Newton's method for vector fields on Riemannian manifolds has already been discussed in \cite{diepeveen2021inexact,DeOliveiraFerreira:2020:1,si2024riemannian}. Newton's method for shape optimization was considered in \cite{Schulz2015}. 
 
Rich classes of problems are formulated in a manifold setting, such as constrained optimization \cite{bergmann2022first,liu2020simple, schiela2021sqp, Schulz2015}, non-symmetric variational problems, problems of stationary action \cite{MarsdenRaitu:1994}, or other saddle point problems, to name just a few.
Given this variety of settings, the following questions arise: What is an appropriate general mathe\-matical framework for Newton's method for mappings between nonlinear spaces? What structure is actually needed to define, but also to implement and globalize Newton's method for such problems? It is the aim of this work to explore possible answers to these questions. 
 
Specifically, we want to establish Newton's method for a mapping $F: \X \to \E$, where $\X$ is a differentiable manifold and $p : \E \to \M$ is a \emph{vector bundle} with fibres $E_y$, $y\in \M$. This setting still allows to formulate root-finding problems, which now read $F(x)=0_{p(F(x))}\in E_{p(F(x))}$, but it is general enough to cover the above mentioned problem classes. 

In contrast to the classical case the linear space $E_{p(F(x))}$ in which the residuals are evaluated is not constant, but depends on $x$ via $p\circ F$. 
Moreover, the derivative of $F: \X\to \E$ is now a mapping $F'(x) : T_x\X \to T_{F(x)}\E$, which means that the Newton equation cannot be formulated in the usual way, since the codomains of $F$ and $F'(x)$, i.e., $\E$ and $T_{F(x)}\E$, do not coincide. It is thus necessary to introduce some additional geometric structure, namely a \emph{connection} on $\E$. A conncetion induces for every $e\in \E$ a linear mapping $Q_e : T_e\E \to E_{p(e)}$, which plays the role of a projection onto the corresponding fibre. This allows us to formulate the Newton equation in a well defined way: 
\[
  Q_{F(x)} \circ F'(x)\delta x +F(x) =0_{p(F(x))}.
\]
If $Q_{F(x)} \circ F'(x)$ is invertible, the Newton direction $\delta x \in T_x\X$ can be computed, and then the Newton step can be defined via a retraction $x_+=R_x(\delta x)$.

To observe convergence of Newton's method we need a \emph{metric} $d : \X \times \X \to \R_+$ on $\X$. Since Newton's method  in the linear setting is not restricted to Hilbert spaces and has important applications in Banach spaces (cf. e.g. \cite{ulbrich2011semismooth} or \cite{Hinze:2022}), we will introduce metrics similar to, but more general than Riemannian distances.
 Finally, to compare images of successive iterates on different fibres,  e.g.,  $F(x_+)\in E_{p(F(x_+))}$ and $F(x)\in E_{p(F(x))}$, \emph{vector back-transports} of the form $\VT{y}{\leftarrow} : \E \to E_y$ are needed on $\E$ (we call them back-transports, because they work in the opposite direction than classical vector transports).

After setting up this framework, we will establish basic local convergence results, based on minimal assumptions on the smoothness of $F$, using a geometric version of Newton differentiability. Besides giving a classical a-priori result, we also establish a theorem, which relates local convergence to a quantity that is purely defined in terms of the space of iterates $\X$, and which can be estimated a-posteriori. This opens the door to define an affine covariant damping strategy in the spirit of Deuflhard~\cite{Deuflhard}, which is based on following the so called Newton path. While carrying over this algorithm from linear spaces to our nonlinear setting, we will observe that strict differentiability of $F$ is required together with a consistency condition of the employed connection and the vector back-transport to make this strategy viable. Finally, we will present an application together with a numerical example concerning the solution of non-symmetric generalized eigenvalue problems. 
More extensive numerical results are presented in a companion paper, cf. \cite{weigl2025newton}. There, we discuss the application of Newton's method for solving variational problems on manifolds and illustrate our results by the numerical computations of so called \emph{elastic geodesics} and the deformation of an inextensible rod.

\section{Preliminaries}
We consider a \textit{Banach manifold} $\X$, i.e. a topological space $\X$ and a collection of charts $(U,\phi)$, where each chart $\phi : U \to \mathbb X$  maps an open subset $U$ of $\X$ homeomorphically into a real Banach space $\mathbb{X}$ with norm $\Vert\cdot\Vert_{\mathbb{X}}$.
Unless otherwise noted we will assume that the manifold is of class $C^1$, which means that the transition mappings, i.e., $t_{ji}:=\phi_j\circ \phi_i^{-1} : \phi_i(U_i\cap U_j)\to \mathbb X$ for charts $\phi_i,\phi_j$, are local diffeomorphisms and thus, in particular, locally Lipschitz. Mappings $F : \X \to \Y$ between two Banach manifolds are called continuous, differentiable, locally Lipschitz, if their representations in charts, i.e. the composition with charts $\phi_{\Y} \circ F \circ \phi_{\X}^{-1}$, have the respective property. It can readily be shown that these properties are independent of the choice of charts. However, we cannot assign a local Lipschitz constant to a locally Lipschitz mapping, since this quantity is chart dependent. 

For a manifold $\X$ we define tangent spaces by using charts, see, e.g., \cite[II §2]{Lang}. Let $x\in\X$ and consider the set 
\[ \lbrace (\phi,v) \; \vert \; \phi:U\rightarrow \mathbb{X} \text{ chart at } x\in\X, \ v\in\mathbb{X} \rbrace. \]
The relation $(\phi_i,v_i)\sim_x(\phi_j,v_j) \ :\Leftrightarrow \ t_{ji}^\prime(\phi_i(x))v_i = v_j$ defines an equivalence relation. The corresponding equivalence class is called a tangent vector $v$ of $\X$ at $x$. The set of these equivalence classes at a point $x\in\X$ is a vector space, called the tangent space $T_x\X$ of $\X$ at $x$. 
 Every differentiable map $F:\X\rightarrow\M$ between two $C^1$-manifolds $\X$ and $\M$ induces a linear map $F^\prime(x) : T_{x}\X \rightarrow T_{F(x)}\M$, termed \textit{tangent map} or \textit{derivative} of $F$ at any $x\in\X$ \cite{Lang}.
 \paragraph{Vector bundles.}
 
Consider a \emph{vector bundle} $p:\E \rightarrow \M$, which will be assumed to be of class $C^1$, unless otherwise noted. Its \textit{projection} $p$ is a surjective $C^1$-map, which assigns to $e\in \E$ its base point $y=p(e)$ in the \textit{base manifold} $\M$ \cite[III §1]{Lang}. The \emph{total space} $\E$ is a manifold with special structure: for each $y\in \M$ the preimage $E_y:=p^{-1}(y)$ is a Banachable space (i.e. a complete topological vector space whose topology is induced by some norm, rendering $E_y$ complete), called \textit{fibre} over $y$. The zero element in $E_y$ is denoted by $0_y$. 
A mapping $v : \M \to \E$, such that $p(v(y))=y$ for all $y\in \M$ is called \textit{section} of $\E$. The set of all sections is denoted by $\Gamma(\E)$. The most prominent example for a vector bundle is the tangent bundle $\pi: T\X\to \X$ of a manifold $\X$ with fibres $T_x\X$. A section of $T\X$ is called a \textit{vector field}.

Vector bundles can be described via general local charts, but usually, \emph{local trivializations} are used instead, which reflect their structure better. For a Banach space $(\mathbb E,\|\cdot\|_{\mathbb E})$ and an open set $U \subset \M$, a local trivialization is a diffeomorphism $\tau : p^{-1}(U) \to U \times \mathbb E$, such that $\tau_y:=\tau|_{E_y} \in L(E_y,\mathbb E)$ is a linear isomorphism. Thus, any element of $e\in \E$ can be represented as a pair $(y,e_\tau)\in \M\times \mathbb E$ with $y=p(e)$ and $e_\tau$ depending on the trivialization. For two trivializations $\tau $ and $\tilde \tau$ around $y\in\M$ we can define smooth transition mappings $A : y \mapsto \tilde \tau_y\tau_y^{-1}$ with $A(y) \in L(\mathbb E,\mathbb E)$, and obtain $e_{\tilde \tau} = A(y)e_\tau$.
If $\phi : U \to \mathbb Y$ is a chart of $\M$, then we can construct a \emph{vector bundle chart}: 
\begin{align*}
   \Phi_{\E} : p^{-1}(U) &\to \mathbb Y \times \mathbb E, \quad 
   e \mapsto (\phi(p(e)),\tau_{p(e)}e),
\end{align*}
which retains the linear structure of the fibres. Vice versa, the second component of a vector bundle chart yields a local trivialization. 

\begin{bemerkung}
    \label{rem:VBCharts}
    For the tangent bundle $\pi: T\X \to \X$ of a manifold $\X$ we have trivializations and
    vector bundle charts in the following natural way:\\
    Let $(U_i,\phi_i)$ be a chart of $\X$ at $x$. Then every tangent vector $v\in T_x\X$ has a chart representation $v_i \in \mathbb{X}$, so $\tau_i(v):=v_i$ is a trivialization. Since tangent vectors transform by $t_{ji}^\prime(\phi_i(x))v_{i} = v_{j}$ under change of charts, the transition mapping is given by $A(x)=t_{ji}^\prime(\phi_i(x))$.
 A vector bundle chart for $T\X$ is then given by:
    \begin{align*}
        \Phi_{T\X} : \pi^{-1}(U_i) &\to \mathbb{X} \times \mathbb{X}, \quad 
        (x,v) \mapsto (\phi_i(x), v_i).
    \end{align*}
\end{bemerkung}

Similarly, each element $\delta e \in T_e\E$ can be represented in local trivializations as a pair of elements $(\delta y,\delta e_\tau) \in T_y\M \times \mathbb E$, where the tangential part $\delta y = p'(e)\delta e$ is given canonically. The representation $\delta e_\tau$ of the fibre part, however, depends crucially on the chosen trivialization, and so there is no \emph{natural} splitting of $\delta e$ into a tangential part and a fibre part. In fact, if $e_{\tilde\tau} = A(y)e_\tau$, then $\delta e_{\tilde\tau} = A(y)\delta e_\tau+(A'(y)\delta y) e_\tau$ by the product rule. 

Given a mapping $f:\X\to \M$ and a vector bundle $p: \E\to \Y$ we denote by $f^*p : f^*\E \to \X$ the \emph{pullback bundle} of $\E$ via $f$, i.e. $(f^*E)_x:= E_{f(x)}$, or $f\circ (f^*p)=p$. 

\paragraph{Fibrewise linear mappings.}
Consider two vector bundles $p_1 : \E_1 \to \M_1, \; p_2 : \E_2 \to \M_2$, and denote by $L(E_{1,y},E_{2,z})$ the set of continuous linear mappings $E_{1,y}\to E_{2,z}$ for $y\in \M_1$ and $z\in\M_2$. 
Given a mapping $s:\M_1\to \M_2$ we can collect all linear mappings in $L(E_{1,y},E_{2,s(y)})$ for all $y\in \M_1$ in a set $\L(\E_1,s^*\E_2)$. We observe that $p_\L : \L(\E_1,s^*\E_2) \to \M_1$ is a vector bundle itself with fibres $L(E_{1,y},E_{2,s(y)})$. In trivializations an element $H$ of $\L(\E_1,s^*\E_2)$ is represented by a pair $(y, H_\tau)$ where $y\in\M_1$ and $H_\tau \in L(\mathbb E_1,\mathbb E_2)$. If the co-domain is a fixed fibre, i.e, $E_z \cong \{z\}\times E_z$, we use the simplified notation $\L(\E_1,E_z)$ instead of the cumbersome $\L(\E_1,s_z^*(\{z\}\times E_z))$, where $s_z:\M_1\to \{z\}$ is the constant map. 

We define a \emph{fibrewise linear mapping}, also known as \emph{vector bundle morphism}, see, e.g., \cite[III §1]{Lang}, as a section $S \in \Gamma(\L(\E_1,s^*\E_2))$, i.e, a mapping 
\[
S: \M_1 \to \L(\E_1,s^*\E_2) \text{ with } p_\L \circ S=Id_{\M_1}. 
\]
$S$ is called locally bounded, respectively differentiable, if its representation in local trivializations
\begin{equation}\label{eq:FibrewiseTriv}
S_\tau : U \to L(\mathbb E_1,\mathbb E_2), \quad S_\tau(y) = \tau_{\L(\E_1, s^*\E_2)}(S):= \tau_{\E_2,s(y)}\circ S(y) \circ \tau_{\E_1,y}^{-1} \in L(\mathbb E_1,\mathbb E_2)
\end{equation}
has the respective property. 

If $S$ is differentiable, then its derivative is a section $S'\in \Gamma(T\L(\E_1,s^*\E_2))$ of the tangent bundle $p_\L' : T\L(\E_1,s^*\E_2)\to T\M_1$, i.e., a mapping 
\[
S' : T\M_1 \to T\L(\E_1,s^*\E_2), \text{ such that } p_\L' \circ S'=Id_{T\M_1}.
\]
From $S \in \Gamma(\L(\E_1,s^*\E_2))$ we can construct a mapping $\NE{S} \in C^1(\E_1,\E_2)$ as follows:
\begin{align*}
    \NE{S} : \E_1 &\to \E_2, \quad 
                    e \mapsto \NE{S}(e):=S(p_1(e))e.
\end{align*}
Thus, we obtain a \textit{natural inclusion} of the sections of the bundle of fibrewise linear mappings into the $C^1$-mappings: 
\[
\varphi : \Gamma(\L(\E_1, s^*\E_2)) \to C^1(\E_1, \E_2), \quad S \mapsto \NE{S}.
\]
Comparison of the derivatives $S'\in \Gamma(T\L(\E_1,s^*\E_2))$ and $\NE{S}' \in \Gamma(\L(T\E_1,S^*T\E_2))$ in trivializations yields by the product rule \begin{equation}
    \label{trivializationDerivative}
    \NE{S}_\tau'(y,e_\tau)(\delta y, \delta e_\tau) = S_\tau(y)\delta e_\tau+(S_\tau'(y)\delta y) e_\tau  \quad \forall (\delta y,\delta e_\tau) \in T_{y}\M_1\times \mathbb E_1.
\end{equation}
For example, if $F: \X \to \M$ is twice differentiable, then the derivative $S:=F'\in \Gamma(\L(T\X,F^*T\M))$ is a differentiable fibrewise linear mapping with $\langle F'\rangle \in C^1(T\X,T\M)$. 

\paragraph{Vector transports.} Special fibrewise linear mappings, which will later be needed in our globalization strategy, are the so called \emph{vector transports}.
\begin{definition}
For a vector bundle $p:\E\rightarrow\M$ and $y \in \M$ we define a \emph{vector transport} as a section $\VT{y}{\rightarrow} \in \Gamma(\L(\M \times E_{y}, \E))$, i.e.
\[
    \VT{y}{\rightarrow} : \M \to \L(\M \times E_{y}, \E), \ \text{with } p_\L(\VT{y}{\rightarrow}(\hat{y})) = \hat{y} \; \forall \hat{y}\in\M,
\]
with the properties that $\VT{y}{\rightarrow}(y)=Id_{E_{y}}$ and $\VT{y}{\rightarrow}(\hat y)$ is invertible for all $\hat y\in \M$. \\
We define a \emph{vector back-transport} as a section $\VT{y}{\leftarrow}\in \Gamma(\L(\E, E_{y}))$, i.e.
\[
    \VT{y}{\leftarrow} : \M \to \L(\E, E_y), \ \text{with } p_\L(\VT{y}{\leftarrow}(\hat{y})) = \hat{y} \; \forall \hat{y}\in\M
\]
with the property that $\VT{y}{\leftarrow}(y)=Id_{E_{y}}$ and $\VT{y}{\leftarrow}(\hat y)$ is invertible for all $\hat y\in \M$. \\
\end{definition}
By restricting $\M$ appropriately, vector transports can also be defined locally. 
Given a vector transport $\VT{y}{\rightarrow} \in \Gamma(\L(\M\times E_y, \E))$, we can derive a vector back-transport $\VT{y}{\leftarrow}\in \Gamma(\L(\E, E_{y}))$ by taking fibrewise inverses $\VT{y}{\leftarrow}(\hat{y}) := (\VT{y}{\rightarrow}(\hat{y}))^{-1}$, $\hat{y}\in\M$, and vice versa. Often, vector transports are given as mappings $V(\cdot,\cdot) : \M\times \M \to \L(\E,\E)$ with $V(y,\hat y)\in L(E_y,E_{\hat y})$. These admit interpretations as vector transports or as vector back-transports as follows: $\VT{y}{\rightarrow}(\hat y)=V(y,\hat y)=\VT{\hat{y}}{\leftarrow}(y)$. Nevertheless, the derivatives of these two objects show subtle, but important differences.

\paragraph{Connections on vector bundles.}
The concept of a \emph{connection} on a vector bundle $p: \E\to \M$ is fundamental to differential geometry. It is an additional geometric structure imposed on $\E$ that describes in an infinitesimal way, how neighboring fibres are related.  The most prominent example is the \textit{Levi-Civita connection} on $\E=T\X$ (see, e.g., \cite[VIII §4]{Lang}) for a Riemannian manifold $\X$.
Connections give rise to further concepts like the covariant derivative, curvature, geodesics, and parallel transports. They are described in the literature in various equivalent ways (cf., e.g., \cite[IV §3 or X §4 or XIII]{Lang}). For the purpose of our paper we choose a formulation that emphasizes the idea of a connection map $Q_e : T_e\E \to E_{p(e)}$, which then induces a splitting of $T_e\E$ into a vertical and a horizontal subspace.

It is well known that the kernel of $p^\prime \in \Gamma(\L(T\E, p^*T\M))$ canonically defines the \emph{vertical subbundle} $V\E$ of $\E$ \cite[IV §3]{Lang}. Its fibres are closed linear subspaces, called the \textit{vertical subspaces} $\mathrm{Vert}_e := \ker p^\prime(e) \subset T_e\E$.
We can identify $E_{p(e)} \cong  \mathrm{Vert}_e$ canonically by the isomorphism 
$\Vertical_e : E_{p(e)} \rightarrow \mathrm{Vert}_e$, given by $w \mapsto \frac{d}{dt}(e+tw)\vert_{t=0}$. Elements of $\mathrm{Vert}_e$ are represented in trivializations by pairs of the form $(0_{y},\delta e_\tau)\in T_y\M \times \mathbb E$.

We define a \emph{connection map} as a section
$Q\in \Gamma(\L(T\E,p^*\E))$, i.e., for each $e\in\E$ we get a continuous linear map $Q_e \in L(T_e\E,E_{p(e)})$,
with the additional property that $Q_e \Vertical_e = Id_{E_{p(e)}}$. The kernels $\mathrm{Hor}_e := \ker Q_e$ are called \emph{horizontal subspaces}, collected in the \emph{horizontal subbundle} $H\E$. While $V\E$ is given canonically, a choice of $Q$ (or equivalently $H\E$), imposes additional geometric structure on $\E$. Using the identification $\mathrm{Vert}_e \cong E_{p(e)}$, we can view $Q$ as a fibrewise ``projection'' onto $\E$, with $\langle Q \rangle : T\E \rightarrow \E$. 
In trivializations $(y, e_\tau) \in \M\times \mathbb E$ for $e\in\E$ and $(\delta y, \delta e_\tau) \in T_y\Y \times \mathbb E$ for $\delta e\in T_e\E$, the representation for a connection map $Q$ can be written in the form
\begin{equation}\label{eq:Qtriv}
    Q_e\delta e \sim Q_{e,\tau}(\delta y,\delta e_\tau)=\delta e_\tau - B_{y,\tau}(e_\tau)\delta y, 
\end{equation}
where $B_{y,\tau} : \mathbb E \to L(T_y\M,\mathbb E)$ assigns a linear mapping $B_{y,\tau}(e_\tau)$ to each $e_\tau \in \mathbb E$. Since $Q_{e,\tau}(0_y,\delta e_\tau)=\delta e_\tau$, we see that $Q_{e,\tau}$ indeed represents a projection onto $\mathrm{Vert}_e$. 
To reflect the linearity of the fibres we want the connection to be \emph{linear} (fibrewise with respect to $e$). To this end, we consider the fibrewise scaling $m_s : \E\rightarrow\E, e \mapsto se$ and require the condition $Q \circ m_s^\prime = m_s \circ Q$ for all $s\in\mathbb{R}$,
which reads in trivializations:
\[
   Q_{se,\tau}(\delta y,s\delta e_\tau)=sQ_{e,\tau}(\delta y,\delta e_\tau) \quad \forall s\in \mathbb R. 
\]
It can be shown that $Q$ is linear, if and only if the mapping $e_\tau \mapsto B_{y,\tau}(e_\tau)$ is linear, or put differently, the mapping $(\delta y,e_\tau) \mapsto B_{y,\tau}(e_\tau)\delta y$ is bilinear. Alternatively, for a vector bundle chart $\Phi$ we have a representation $B_{y,\Phi}:\mathbb Y\times \mathbb E\to \mathbb E$.

If $\E=T\X$ in classical Riemannian geometry, where $Q$ is given by the Levi-Civita connection, the bilinear mapping $B_{x,\Phi}:\mathbb X \times \mathbb X\to \mathbb X$ is represented by Christoffel symbols, and it is symmetric, i.e., $B_{x,\Phi}(v_i)w_i=B_{x,\Phi}(w_i)v_i$, in natural tangent bundle charts.         
\paragraph{Connections induced by vector back-transports.}
If no Levi-Civita connection is present or expensive to evaluate algorithmically, connections can be derived from differentiable vector back-transports, which are needed frequently for numerical purposes anyway:
\begin{lemma}
\label{lem:VTconnection}
Let $\VT{y}{\leftarrow} \in  \Gamma(\L(\E, E_{y}))$ be a vector back-transport and $e\in E_y$, $y= p(e)$. Then
\[
    Q_{e}:= \NE{\VT{y}{\leftarrow}}^\prime(e):T_{e}\E \to E_{y}
\]
defines a linear connection map at $e$, which is represented in trivializations by \eqref{eq:Qtriv}  with 
\begin{equation}\label{eq:diffVtTriv}
B_{y,\tau}(e_\tau)\delta y=-(\VTprime{{y,\tau}}{\leftarrow}(y)\delta y) e_\tau.
\end{equation}
If $\VT{y}{\leftarrow}$ is defined via fibrewise inverses of a vector transport $\VT{y}{\rightarrow}\in \Gamma(\L(\M\times E_y,\E))$, then 
\begin{equation}\label{eq:diffVtTriv2}
B_{y,\tau}(e_\tau)\delta y=(\VTprime{{y,\tau}}{\rightarrow}(y)\delta y) e_\tau.
\end{equation}
\end{lemma}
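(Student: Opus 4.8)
The plan is to verify the three claims directly in local trivializations, since all the relevant notions — being a linear connection map, the representation \eqref{eq:Qtriv}, and linearity — are defined chart-wise. First I would fix a local trivialization $\tau$ of $\E$ around $y=p(e)$ and recall from the preliminaries that a vector back-transport $V_y^{-1}\in\Gamma(\L(\E,E_y))$ is a fibrewise linear mapping $\M\to\L(\E,E_y)$, so its natural inclusion $\NE{V_y^{-1}}\in C^1(\E,E_y)$ is given by $\NE{V_y^{-1}}(e)=V_y^{-1}(p(e))e$. Applying \eqref{trivializationDerivative} with $S=V_y^{-1}$, $S_\tau=V_{y,\tau}^{-1}$, and noting that the target fibre $E_y$ carries the fixed trivialization inherited from $\tau$ at the base point $y$, I obtain
\[
  \NE{V_y^{-1}}_\tau'(e)(\delta z,\delta e_\tau)=V_{y,\tau}^{-1}(p(e))\,\delta e_\tau+\bigl((V_{y,\tau}^{-1})'(p(e))\delta z\bigr)e_\tau .
\]
Evaluating at the distinguished point where $p(e)=y$ and using $V_y^{-1}(y)=Id_{E_y}$, hence $V_{y,\tau}^{-1}(y)=Id_{\mathbb E}$, gives $Q_{e,\tau}(\delta z,\delta e_\tau)=\delta e_\tau+\bigl((V_{y,\tau}^{-1})'(y)\delta z\bigr)e_\tau$, which is exactly the form \eqref{eq:Qtriv} with $B_{y,\tau}(e_\tau)\delta z=-\bigl((V_{y,\tau}^{-1})'(y)\delta z\bigr)e_\tau$, proving \eqref{eq:diffVtTriv}.

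Next I would check that this $Q_e$ is genuinely a connection map, i.e. that $Q_e\in L(T_e\E,E_y)$ and $Q_e\kappa_e=Id_{E_y}$. Continuity and linearity in $\delta e$ are immediate from the trivialized formula (it is affine-linear in $(\delta z,\delta e_\tau)$ and linear once restricted to a fibre direction). For the projection property, recall that $\kappa_e(w)$ is represented in trivializations by $(0_y,w_\tau)$, where $w_\tau$ is the image of $w$ under the target trivialization; plugging $\delta z=0_y$ into the formula yields $Q_{e,\tau}(0_y,w_\tau)=w_\tau$, i.e. $Q_e\kappa_e=Id_{E_y}$. Then linearity of the connection: by the characterization stated just before the lemma, $Q$ is linear iff $e_\tau\mapsto B_{y,\tau}(e_\tau)$ is linear; but $e_\tau\mapsto-\bigl((V_{y,\tau}^{-1})'(y)\delta z\bigr)e_\tau$ is manifestly linear in $e_\tau$ for each fixed $\delta z$ because $(V_{y,\tau}^{-1})'(y)\delta z\in L(\mathbb E,\mathbb E)$ acts linearly. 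One should also note chart-independence: the defining formula $Q_e=\NE{V_y^{-1}}'(e)$ is intrinsic (a derivative of a genuine $C^1$-map between manifolds), so the trivialized expression transforms correctly and \eqref{eq:Qtriv} holds in every trivialization.

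Finally, for \eqref{eq:diffVtTriv2}, suppose $V_y^{-1}$ arises as the pointwise inverse of a vector transport $V_y\in\Gamma(\L(\M\times E_y,\E))$, so $V_{y,\tau}^{-1}(\hat y)=V_{y,\tau}(\hat y)^{-1}$ in trivializations. Differentiating the identity $V_{y,\tau}(\hat y)V_{y,\tau}(\hat y)^{-1}=Id_{\mathbb E}$ in $\hat y$ gives $(V_{y,\tau}^{-1})'(\hat y)=-V_{y,\tau}(\hat y)^{-1}\bigl(V_{y,\tau}'(\hat y)\bigr)V_{y,\tau}(\hat y)^{-1}$; evaluating at $\hat y=y$ and using $V_{y,\tau}(y)=Id_{\mathbb E}$ yields $(V_{y,\tau}^{-1})'(y)=-V_{y,\tau}'(y)$, so $B_{y,\tau}(e_\tau)\delta z=-\bigl((V_{y,\tau}^{-1})'(y)\delta z\bigr)e_\tau=\bigl(V_{y,\tau}'(y)\delta z\bigr)e_\tau$, as claimed. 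The only mildly delicate point in the whole argument is bookkeeping the two trivializations involved — the one on $\E$ as domain and the induced fixed one on the fibre $E_y$ as codomain — and confirming that \eqref{trivializationDerivative} applies verbatim with $f$ the constant map $p(e)\mapsto y$; once that is set up, everything reduces to the product rule and the Neumann-series derivative of an inverse.
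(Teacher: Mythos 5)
Your proposal is correct and follows essentially the same route as the paper's proof: apply \eqref{trivializationDerivative} to $\NE{V_y^{-1}}$, use $V_{y,\tau}^{-1}(y)=Id_{\mathbb E}$ to read off the form \eqref{eq:Qtriv} and hence \eqref{eq:diffVtTriv}, verify $Q_e\kappa_e=Id_{E_y}$ via the representation $(0_y,\cdot)$ of vertical vectors, and obtain \eqref{eq:diffVtTriv2} from the derivative-of-the-inverse rule at $\hat y=y$. Your additional remarks on linearity of $e_\tau\mapsto B_{y,\tau}(e_\tau)$ and chart-independence only make explicit what the paper leaves implicit.
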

\begin{proof}
By differentiation of $\NE{\VT{y}{\leftarrow}} : \E \to E_{y}$ we obtain that $\NE{\VT{y}{\leftarrow}}^\prime(e):T_e\E\to E_y$ is a continuous linear map.
Using \eqref{trivializationDerivative} and $\VT{y}{\leftarrow}(y) = Id_{E_y}$, the representation of $\NE{\VT{y}{\leftarrow}}' \in \Gamma(\L(T\E,E_y))$ in trivializations, where $\VT{y,\tau}{\leftarrow}: \M \to L(\mathbb E,\mathbb E)$, reads
\begin{equation*}
    \NE{\VT{y}{\leftarrow}}_\tau'(y,e_\tau)(\delta y, \delta e_\tau) = \delta e_\tau + \VT{y,\tau}{\leftarrow}'(y)\delta y) e_\tau \quad \forall (\delta y,\delta e_\tau),
\end{equation*}
which is of the form \eqref{eq:Qtriv}, implying \eqref{eq:diffVtTriv} and linearity. 
Let $\hat{e}\in E_y$. Since $\kappa_e(\hat{e}) \in \mathrm{Vert}_e \cong E_y$ we get the representation $\kappa_e(\hat{e})_\tau \sim (0_y, \widehat{\delta e}_\tau)$. Thus, we obtain $Q_e\kappa_e = Id_{E_y}$.

To show \eqref{eq:diffVtTriv2} we compute by the calculus rule for inverse matrices and $\VT{y,\tau}{\leftarrow}(y)=Id_{\mathbb E}$:
\begin{align*}
 \VT{y,\tau}{\leftarrow}'(y)\delta y=-\VT{y,\tau}{\leftarrow}(y)\VT{y,\tau}{\rightarrow}'(y)\delta y \, \VT{y,\tau}{\leftarrow}(y)=-\VT{y,\tau}{\rightarrow}'(y)\delta y.
\end{align*}
\end{proof}

\begin{definition}
\label{def:consistentConnection}
 Let $e\in \E$ and $y=p(e)$. We call a vector back-transport $\VT{y}{\leftarrow} \in \Gamma(\L(\E, E_y))$  \emph{consistent} with a connection map $Q \in \Gamma(\L(T\E, p^*\E))$ at $e$, if $Q_e = \NE{\VT{y}{\leftarrow}}^\prime(e)$.
\end{definition}
\begin{bemerkung}
In classical Riemannian geometry, the \emph{Levi-Civita connection} (see, e.g., \cite{Lang}) on $T\X$ is often used. This connection induces the parallel transport along geodesics and the exponential map, which may serve algorithmically as a vector transport and as a retraction. However, from an algorithmic point of view, these classical mappings, which are defined via integration, can be computationally very expensive. Then, instead of starting with a connection and deriving a vector transport from it, one can take the opposite route: start with a computationally tractable vector back-transport $\VT{y}{\leftarrow}\in\Gamma(\L(\E, E_y))$, and derive a consistent connection $Q$ by differentiation.
\end{bemerkung}
\section{Definition of Newton's method}\label{sec:DefNewton}

Let $F:\X \rightarrow \E$ be a differentiable mapping between a  Banach manifold $\X$ and a vector bundle $p:\E\to\M$. 
By the composition
\[
    y := p\circ F : \X \to \M, \; y(x) := p(F(x))\in \M,
\]
we can compute the base point $y(x) \in \M$ of $F(x)\in \E$ for given $x\in \X$. 
Consider the following root finding problem
\begin{equation*}
    F(x) = 0_{{y(x)}}.
\end{equation*}
In contrast to classical root finding problems, the linear space $E_{y(x)}$ in which $F(x)$ is evaluated now depends on $x$. Hence, when constructing iterative methods we will have to take into account that $E_{y(x)}$ changes during the iteration. 

To derive Newton's method for this problem we need to define a suitable \textit{Newton direction} $\delta x\in T_x\X$. The tangent map of $F$ is a mapping $F^\prime \in \Gamma(\L(T\X,F^*T\E))$. Since $F^\prime(x)\delta x \in T_{F(x)}\E$ and $F(x)\in E_{y(x)}$, these two quantities cannot be added, and thus the classical Newton equation $F^\prime(x)\delta x + F(x) = 0$ is not well defined. 

A \emph{connection map} $Q \in \Gamma(\L(T\E,p^*\E))$  allows us to resolve this issue.
With its help we can define the following linear mapping at $x\in \X$, which maps into the correct space:
\begin{equation}\label{eq:NewtonEquation}
    Q_{F(x)} \circ F'(x) : T_x \X \to E_{y(x)}. 
\end{equation}
Then the \emph{Newton equation} is well defined as the following linear operator equation
\begin{equation*}
Q_{F(x)} \circ F^\prime(x)\delta x + F(x) = 0_{{y(x)}}.
\end{equation*}
If $Q_{F(x)} \circ F^\prime(x)$ is invertible, then the Newton direction $\delta x \in T_x\X$ is given as the unique solution of this equation, and $\delta x=0$ holds if and only if $F(x)=0$.

The classical additive Newton update $x_+=x+\delta x$ is also not well defined since $x\in \X$ and $\delta x\in T_x\X$ cannot be added. To obtain a new iterate, we have to map $\delta x\in T_x\X$ back to the manifold $\X$, which can be done by a \textit{retraction}, a popular concept, used widely in numerical methods on manifolds (cf., e.g., \cite[Chap. 4]{absil2008optimization}).
\begin{definition}[Retraction]
    A $C^1$-\emph{retraction} on a manifold $\X$ is a $C^1$-mapping $R:T\X \rightarrow \X$, where $R_x : T_x\X\to \X$ for a fixed $x\in \X$ satisfies the following properties:
    \begin{itemize}
    \item[(i)] $R_x(0_x)=x$ 
    \item[(ii)] $R_x^\prime(0_x)=Id_{T_x\X}$
    \end{itemize}
\end{definition}
Thus, after successfully computing the Newton direction we use a local retraction to generate the \textit{Newton step} 
\begin{equation*}
    x_+ := R_x(\delta x).
\end{equation*}
The result is the following local algorithm:
\FloatBarrier
\begin{algorithm}[h]
    \caption{Newton's method on vector bundles}
        \begin{algorithmic}
            \REQUIRE $x_0$ (initial guess)
            \FOR {$k = 0,1,2,...$}
                \STATE $\delta x_k \leftarrow Q_{F(x_k)} \circ F^\prime (x_k)\delta x_k + F(x_k) = 0_{{y(x_k)}}$
                \STATE $x_{k+1} = R_{x_k}(\delta x_k)$
            \ENDFOR
    \end{algorithmic}
\end{algorithm}
\FloatBarrier
\section{Newton-Differentiability and Strict Differentiability}
Classically, quadratic convergence of Newton's method is analysed for continuously differentiable mappings, where the derivative satisfies a Lipschitz condition. However, in recent years, weaker smoothness concepts have become popular, which are sufficient to show local superlinear convergence.  
For the analysis of Newton's method we will discuss two of these differentiability concepts, which are known on linear spaces, but slightly non-standard in the context of manifolds. The first is the so called \emph{Newton-differentiability} \cite{mifflin1977semismooth,qi1993nonsmooth,ulbrich2011semismooth}, a concept which is tailored for the analysis of local convergence of Newton's method, even for semismooth problems, the second one is \emph{strict differentiability}  \cite{nijenhuis1974strong,schechter1996handbook}, a notion that is stronger than Fr\'echet differentiability, but weaker than continuous differentiability, and we will use it to study the \emph{simplified} Newton method, which is a building block of our globalization scheme, introduced below. Semismoothness of vector fields on Riemannian manifolds has already been discussed in \cite{si2024riemannian,diepeveen2021inexact,de2020newton} by using parallel transport and the exponential map. We consider mappings into general manifolds and define semismoothness and strict differentiability by looking at the problem in local charts and showing invariance with respect to changes of charts. Later, we will also view semismoothness and strict differentiability in a geometric form using general retractions and vector transports.

Consider $F:\X\to\Y$ where $\X$ and $\Y$ are both $C^1$-Banach manifolds.
Let $(U, \phi)$ be a chart of $\X$ and $(V,\psi)$ be a chart of $\Y$. 
Consider the following representation of $F$ in charts
\[
    F_{\psi,\phi} := \psi \circ F \circ \phi^{-1} : \phi(U) \to \mathbb Y.
\]
Consider also a fibrewise linear mapping $F'\in \Gamma(\L(T\X, F^*T\Y))$ with chart representations
\[
    (F_{\psi,\phi})' : \phi(U) \to L(\mathbb{X}, \mathbb{Y}).
\]
As a first step, we will state our differentiability concepts for these chart representations.
$F_{\psi,\phi}$ is called \emph{Newton-differentiable} at  $\xi_0 \in \phi(U)$ with respect to a \emph{Newton-derivative} $(F_{\psi,\phi})'$ if 
\begin{equation}
    \label{eq:NewtonDiffbarVR}
    \lim_{\xi \to \xi_0} \frac{\Vert (F_{\psi,\phi})'(\xi)(\xi-\xi_0) - (F_{\psi,\phi}(\xi)-F_{\psi,\phi}(\xi_0))\Vert_{\mathbb{Y}}}{\Vert \xi-\xi_0\Vert_{\mathbb{X}}} = 0.
\end{equation}
Newton-derivatives are, in contrast to classical derivatives, not unique, since the linearization $(F_{\psi,\phi})'(\xi)$ is evaluated at the moving point $\xi$. This is the reason, why our definition refers to a given pair $F_{\psi,\phi}, \; (F_{\psi,\phi})'$.
\begin{example}
    The real function $F(x)=|x|$ is Newton-differentiable at $x_0=0$ with respect to $F'$, if there is a neighbourhood $U$ of $x_0$, such that $F'(x)=\mathrm{sgn}\,x$ for all $x\in U\setminus \{x_0\}$.   
\end{example}
We call $F_{\psi,\phi}$ \emph{strictly differentiable} at a point $\xi_0\in\phi(U)$ if there is $(F_{\psi,\phi})'(\xi_0)\in L(\mathbb{X},\mathbb{Y})$, such that the following condition holds:\\
    For every $\varepsilon >0$ there exists a neighborhood $U_\varepsilon$ of $\xi_0$ such that
    \begin{equation}
        \label{eq:StriktDiffbarVR}
        \Vert (F_{\psi,\phi})'(\xi_0)(\xi-\eta) - (F_{\psi,\phi}(\xi) - F_{\psi,\phi}(\eta))\Vert_\mathbb{Y} < \varepsilon \Vert \xi-\eta\Vert_\mathbb{X} \quad \forall \xi,\eta \in U_\varepsilon.
    \end{equation}
    Strict differentiability is a slightly stronger notion than Fr\'echet differentiability, but weaker than continuous differentiability. 
    By the triangle inequality, we see that strictly differentiable mappings are locally Lipschitz continuous near $\xi_0$ with a Lipschitz constant that is close to $\|(F_{\psi,\phi})'(\xi_0)\|_{\mathbb X\to \mathbb Y}$. In linear spaces it is well known, and also not hard to verify, that continuous differentiability implies Newton-differentiability (w.r.t. the classical derivative), as well as strict differentiability.

    Our second step is to show independence of these definitions of the choice of charts. To this end we need the following chain rules for Newton- and strict differentiability in linear spaces:
\begin{lemma}
    Let $\mathbb{X}, \; \mathbb{Y}$ and $\mathbb{Z}$ be normed linear spaces. Consider $h:\mathbb{X} \to \mathbb{Y}$ and $g:\mathbb{Y}\to\mathbb{Z}$.
    \begin{enumerate}
        \item[(i)] Let $h$ and $g$ be Newton-differentiable at $\xi_0$ and $h(\xi_0)$ with respect to Newton-derivatives $h'$ and $g'$ in open sets $V$ and $U$, respectively, where $\xi_0 \in U\subset \mathbb{X}$, $h(U)\subset V \subset \mathbb{Y}$. Assume that $h$ is Lipschitz continuous in $U$ and $g'$ is locally bounded in $V$. Then $g\circ h : \mathbb{X} \to \mathbb{Z}$ is Newton-differentiable with respect to the Newton-derivative $g'(h(\cdot))h'(\cdot):U \to L(\mathbb{X},\mathbb{Z})$.
        \item[(ii)] Let $h$ and $g$ be strictly differentiable at $\xi_0$ and $h(\xi_0)$ with derivatives $h'(\xi_0)$ and $g'(h(\xi_0))$, respectively.
        Then $g\circ h : \mathbb{X} \to \mathbb{Z}$ is strictly differentiable with derivative $g'(h(\xi_0))h'(\xi_0) \in L(\mathbb{X},\mathbb{Z})$.
    \end{enumerate}
\end{lemma}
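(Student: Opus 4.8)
The plan is to prove both parts by unwinding the definitions in the spirit of the classical chain rules, exploiting that all spaces here are linear so no charts intervene. For part (i), I would start from the quantity to be estimated,
\[
  \|g'(f(\xi))f'(\xi)(\xi-\xi_0) - (g(f(\xi))-g(f(\xi_0)))\|_{\mathbb Z},
\]
and insert the intermediate term $g'(f(\xi))(f(\xi)-f(\xi_0))$, splitting it into
\[
  \|g'(f(\xi))\bigl(f'(\xi)(\xi-\xi_0)-(f(\xi)-f(\xi_0))\bigr)\|_{\mathbb Z}
  + \|g'(f(\xi))(f(\xi)-f(\xi_0)) - (g(f(\xi))-g(f(\xi_0)))\|_{\mathbb Z}.
\]
The first summand is handled by local boundedness of $g'$ near $f(\xi_0)$ together with the Newton-differentiability of $f$ at $\xi_0$: divide by $\|\xi-\xi_0\|_{\mathbb X}$ and send $\xi\to\xi_0$. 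For the second summand I would first use local Lipschitz continuity of $f$ to bound $\|f(\xi)-f(\xi_0)\|_{\mathbb Y}\le L\|\xi-\xi_0\|_{\mathbb X}$, and then apply the Newton-differentiability of $g$ at $f(\xi_0)$: given $\varepsilon>0$ there is a neighborhood of $f(\xi_0)$ on which that remainder is at most $\varepsilon\|f(\xi)-f(\xi_0)\|_{\mathbb Y}\le \varepsilon L\|\xi-\xi_0\|_{\mathbb X}$ (with the degenerate case $f(\xi)=f(\xi_0)$ trivial). Choosing $\xi$ close enough to $\xi_0$ (using continuity of $f$) makes $f(\xi)$ land in that neighborhood, so after dividing by $\|\xi-\xi_0\|_{\mathbb X}$ the second summand is $\le \varepsilon L$; since $\varepsilon$ is arbitrary the quotient tends to $0$.

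For part (ii), the structure is the same but symmetric in two points $\xi,\eta$. Here I would estimate
\[
  \|g'(f(\xi_0))f'(\xi_0)(\xi-\eta) - (g(f(\xi))-g(f(\eta)))\|_{\mathbb Z}
\]
by inserting $g'(f(\xi_0))(f(\xi)-f(\eta))$, giving the two terms
\[
  \|g'(f(\xi_0))\bigl(f'(\xi_0)(\xi-\eta)-(f(\xi)-f(\eta))\bigr)\|_{\mathbb Z}
  + \|g'(f(\xi_0))(f(\xi)-f(\eta)) - (g(f(\xi))-g(f(\eta)))\|_{\mathbb Z}.
\]
The first is controlled by the operator norm of the fixed map $g'(f(\xi_0))$ times the strict-differentiability remainder of $f$ at $\xi_0$, which is $<\varepsilon'\|\xi-\eta\|_{\mathbb X}$ on a suitable neighborhood. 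For the second, note that strict differentiability of $f$ at $\xi_0$ also gives a local Lipschitz bound $\|f(\xi)-f(\eta)\|_{\mathbb Y}\le (\|f'(\xi_0)\|+1)\|\xi-\eta\|_{\mathbb X}$ near $\xi_0$, and then strict differentiability of $g$ at $f(\xi_0)$ bounds the remainder by $\varepsilon''\|f(\xi)-f(\eta)\|_{\mathbb Y}$ once $f(\xi),f(\eta)$ lie in the corresponding neighborhood of $f(\xi_0)$, which holds for $\xi,\eta$ close to $\xi_0$ by continuity of $f$. Combining and choosing $\varepsilon',\varepsilon''$ small relative to the prescribed $\varepsilon$ finishes the proof.

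I expect the only real subtlety to be bookkeeping with the neighborhoods: one must first fix the neighborhood of $f(\xi_0)$ coming from the hypothesis on $g$ (and the associated Lipschitz constant or bound on $g'$), and only then shrink the neighborhood of $\xi_0$ so that $f$ maps it inside, which is legitimate because $f$ is continuous (being locally Lipschitz, resp. strictly differentiable, hence Fréchet differentiable and continuous). The degenerate situation $f(\xi)=f(\xi_0)$ (resp. $f(\xi)=f(\eta)$) should be noted explicitly since then the inner Newton/strict remainder for $g$ vanishes and the corresponding term is simply zero. Everything else is the triangle inequality and a division by $\|\xi-\xi_0\|_{\mathbb X}$ (resp. $\|\xi-\eta\|_{\mathbb X}$), so no genuinely hard estimate arises. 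The lemma is then invoked in the subsequent text to transfer Newton- and strict differentiability through the chart transition maps, which are $C^1$ and hence have both properties, so that the two definitions above are chart-independent.
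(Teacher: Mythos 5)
Your proposal is correct and follows essentially the same route as the paper's proof: insert the intermediate term $g'(\cdot)(f(\xi)-f(\eta))$, bound the resulting two summands via local boundedness of $g'$ (resp.\ the fixed operator $g'(f(\xi_0))$) together with the Newton/strict remainder of $f$, and via the Lipschitz-type bound on $\|f(\xi)-f(\eta)\|_{\mathbb Y}$ together with the Newton/strict remainder of $g$. The only cosmetic difference is that the paper carries out the decomposition once in a unified way before splitting into the two cases, while you treat (i) and (ii) separately (and explicitly note the degenerate case $f(\xi)=f(\xi_0)$), which changes nothing of substance.
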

\begin{proof}
    For Newton-differentiabilty, a proof can be found in the lecture notes \cite{hintermuller2010semismooth}, for strict differentiability it is stated in \cite{nijenhuis1974strong}. 
\end{proof}
\begin{proposition}
    Consider $F : \X\to \Y$. 
    \begin{itemize}
     \item[i)] Let $F$ be locally Lipschitz continuous and $F'\in\Gamma(\L(T\X,F^*T\Y))$ be locally bounded. Then the criterion \eqref{eq:NewtonDiffbarVR} for Newton-differentiability is independent of the choice of charts. 
     \item[ii)]  The criterion \eqref{eq:StriktDiffbarVR} for strict differentiability of $F$ is independent of the choice of charts. 
    \end{itemize}
\end{proposition}\begin{proof}
    Let $\phi_i$ and $\psi_i$ be charts of $\X$ and $\Y$ such that $F$ is Newton-, respectively strictly, differentiable at $x_0\in\X$. Let $\phi_j$ be another chart of $\X$ and $\psi_j$ be another chart of $\Y$. Consider the mapping
    \[
        \psi_j\circ F \circ \phi_j^{-1} = t_{ji}^\Y \circ (\psi_i \circ F \circ \phi_i^{-1})\circ t_{ij}^\X.
    \]
    the changes of charts $t_{ij}^\X$ and $t_{ji}^\Y$ are continuously differentiable and thus Newton-differentiable, as well as strictly differentiable. By applying the chain rule twice, we obtain Newton- or strict differentiability of $\psi_j\circ F \circ \phi_j^{-1}$.
\end{proof}

 Using independence of chart representations, we can finally define \textit{Newton-differentiability} for mappings between manifolds:
\begin{definition}[Newton-differentiability of mappings between manifolds]
    Let $F:\X\to\Y$ be locally Lipschitz continuous and $F'\in\Gamma(\L(T\X,F^*T\Y))$ be locally bounded. 
    
    We call $F$ \emph{Newton-differentiable at a point $x_0\in\X$ with respect to $F'$} if all chart representations of $F$ and the corresponding representations of $F'$ satisfy \eqref{eq:NewtonDiffbarVR} for $\xi_0=\phi(x_0)$.
    In this case we call $F'$ a \emph{Newton-derivative} of $F$ at $x_0$.
\end{definition}

Analogously, we can define \textit{strict differentiability}: \begin{definition}[Strict differentiability of mappings between manifolds]
    Let $F:\X\to\Y$. $F$ is called \emph{strictly differentiable} at $x_0\in\X$, if its derivative $F'(x_0)\in L(T_{x_0}\X,T_{F(x_0)}\Y)$ satisfies \eqref{eq:StriktDiffbarVR} in all chart representations for $\xi_0 = \phi(x_0)$.
\end{definition}
Since the chart representation of a $C^1$-mapping $F:\X\to\Y$ is a $C^1$-mapping between linear spaces, we can conclude, that such $F$ is also Newton-differentiable and strictly differentiable. 
\section{Local convergence of Newton's method}
In this section we will study local convergence of Newton's method for mappings into vector bundles. This will require some preparation. Formulating local superlinear convergence requires a metric on $\X$ to measure the distance of iterates. In our general setting we are considering Banach manifolds which may not be endowed with a Riemannian metric. Thus, we will discuss a slightly more general way to define a metric on a manifold. Second, we will derive a geometric criterion on Newton- and strict differentiability. This will be especially useful later to connect our \emph{a-priori} analysis of local convergence with some computable \emph{a-posteriori} quantities, which are defined in the spirit of Deuflhard \cite{Deuflhard}. With these tools we can then give criteria for local superlinear convergence of Newton's method. 
\subsection{Local norms and a metric on manifolds}\label{sec:metric}
In Riemannian geometry, manifolds are equipped with a Riemannian metric, based on inner pro\-ducts on the tangent bundle \cite[VII §6]{Lang}. However, in the case of linear spaces, it is known that the convergence theory of Newton's method is not restricted to Hilbert spaces, but can be applied in general Banach spaces. Hence, we will present a slightly more general notion of a metric, replacing the usual inner products by general norms. In the infinite dimensional case, some care has to be taken, concerning continuity properties.  
\begin{definition}
Consider a function $f : \mathcal E \to \R$ on a vector bundle $p: \E\to \M$. We call $f$ \emph{fibrewise uniformly continuous} around $D \subset E_y$, $y\in\M$, if for any local trivialization $\tau$, the function 
\[
f_\tau:= f\circ \tau^{-1} : \tau(p^{-1}(U)) \to \R
\]
satisfies the following condition: \\
For all $\varepsilon > 0$ there exists a neighborhood $U\subset \M$ of $y$ and $\delta >0$, such that for all $(y,e_\tau) \in \tau(D)$ we have the estimate:
\[
 |f_\tau(y,e_\tau)-f_\tau(\hat y,\hat e_\tau)|< \varepsilon \quad \forall (\hat y,\hat e_\tau)\in p^{-1}(U) : \hat y \in U, \|\hat e_\tau-e_\tau\|_{\mathbb E} < \delta. 
\]
\end{definition}
It is not hard to show that this notion is independent of the choice of trivialization, taking into account that transition mappings are fibrewise linear isomorphisms. 

Clearly, fibrewise uniform continuity is a stronger condition than continuity, since $U$ and $\delta$ only depend on $y$ and not on $e_\tau$. In particular, we conclude, setting $\hat e_\tau =e_\tau$: 
\begin{equation}\label{eq:unicontsup}
 \sup_{e_\tau \in \tau(D)}|f_\tau(y,e_\tau)-f_\tau(\hat y,e_\tau)|< \varepsilon \quad \forall \hat y \in U.
\end{equation}
On a vector bundle $p : \E \to \M$ we can define a \emph{continuous norm} as follows:
\begin{definition}\label{def:contnorm}
 Consider a function $\|\cdot\| : \E \to [0,\infty)$ with the following properties:
 \begin{itemize}
  \item[i)] For each $y\in \M$ the restriction $\|\cdot\|_y$ to $E_y$ is a norm on $E_y$ which is equivalent to the $\Vert \cdot \Vert_{\mathbb{E}}$-norm that makes $\mathbb{E}$ a Banach space. Thus, $(E_y,\|\cdot\|_y)$ is a Banach space. We denote by $\mathbb{S}_y \subset E_y$ the unit sphere with respect to $\|\cdot\|_y$. 
  \item[ii)] For each $y\in \M$, $\|\cdot\|$ is fibrewise uniformly continuous around the unit sphere $\mathbb{S}_y \subset E_y$, where we denote
  \[
   \|e_\tau\|_{\tau,y}:=\|\cdot\|_\tau(y,e_\tau)=\|\tau_y^{-1}(e_\tau)\|_y
  \]

 \end{itemize}
We call $\|\cdot\|$ a \emph{continuous norm} on $\E$. 
\end{definition}
Our definition includes Riemannian metrics, as well as Finsler metrics, but is more general than those: no differentiability assumption is imposed, since this is not needed for our specific purpose. In particular, non-differentiable norms, such as $1$-norms or $\infty$-norms fit into our framework. 

\begin{example}
 Let $\mathbb E=L_q(\Omega)$ for $1\le q\le \infty$ and some measurabe set $\Omega$. Further let $s: \M \times \Omega \to (0,\infty)$ be a function, such that each $s(y,\cdot): \Omega \to (0,\infty)$ is measurable and bounded from above and below by positive constants (which may depend on $y$). Then for each $y\in \M$, $\|v\|_y :=\|s(y,\cdot)v\|_{L_q}$ defines a norm that is equivalent to $\|\cdot\|_{L_q}$. If we have in addition that $\lim_{\hat y\to y}\|s(y,\cdot)-s(\hat y,\cdot)\|_\infty=0$ for all $y\in \M$, then this norm is continuous in the sense of Definition~\ref{def:contnorm}, as can be seen by straightfoward application of the triangle inequality and the Hölder inequality. This is not the case, if we only assume that $s(\cdot,\omega)$ continuous for each $\omega \in \Omega$. 
\end{example}

\begin{lemma}
    \label{normEquivalence}
 Let $y\in \M$ and $\|\cdot\| : \E\to [0,\infty)$ be a continuous norm. Consider its representation $\|\cdot\|_\tau$ in a local trivialization around $y\in \M$. Then for every $\varepsilon >0$ there is a neighborhood $U$ of $y$, such that
 \[
  (1-\varepsilon)\|e_\tau\|_{\tau,y} \le \|e_\tau\|_{\tau,\hat y} \le (1+\varepsilon)\|e_\tau\|_{\tau,y}\quad \forall \hat y \in U, e_\tau\in \mathbb E. 
 \]
\end{lemma}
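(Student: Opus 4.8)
The plan is to reduce the two-sided estimate to the unit sphere by homogeneity, and then read it off directly from fibrewise uniform continuity. First I would fix a trivialization $\tau$ around $y$ and record what the notation means: for $\xi$ in the domain of $\tau$ and $e_\tau\in\mathbb E$, $\|e_\tau\|_{\tau,\xi}$ denotes $\|\tau^{-1}(\xi,e_\tau)\|=\|\tau_\xi^{-1}e_\tau\|_\xi$, and by property i) of a continuous norm this is, for each fixed $\xi$, the pullback under the linear isomorphism $\tau_\xi^{-1}$ of a genuine norm on $E_\xi$; in particular $e_\tau\mapsto\|e_\tau\|_{\tau,\xi}$ is absolutely homogeneous on $\mathbb E$ for every $\xi$. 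Hence it suffices to prove the bound for $e_\tau$ on the $\|\cdot\|_{\tau,y}$-unit sphere, i.e. for $\|e_\tau\|_{\tau,y}=1$; the case $e_\tau=0$ is trivial, and a general $e_\tau\neq 0$ is handled by scaling in the last step.

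Next I would invoke property ii): $\|\cdot\|$ is fibrewise uniformly continuous around $\mathbb S_y\subset E_y$. Applying the definition with $D=\mathbb S_y$ (so that $\tau(D)=\{(y,e_\tau):\|e_\tau\|_{\tau,y}=1\}$) and specializing to $\eta_\tau=e_\tau$ — which is exactly the consequence \eqref{eq:unicontsup} — yields, for the prescribed $\varepsilon$, a neighborhood $V$ of $y$ (shrunk if necessary to lie in the domain of $\tau$) with
\[
 \sup_{\|e_\tau\|_{\tau,y}=1}\bigl|\,\|e_\tau\|_{\tau,y}-\|e_\tau\|_{\tau,\xi}\,\bigr|<\varepsilon\quad\forall\,\xi\in V.
\]
Since $\|e_\tau\|_{\tau,y}=1$ on this sphere, this is precisely $1-\varepsilon<\|e_\tau\|_{\tau,\xi}<1+\varepsilon$ for all such $e_\tau$ and all $\xi\in V$, i.e. the claimed estimate on the unit sphere.

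Finally I would combine the two ingredients: for arbitrary $e_\tau\in\mathbb E\setminus\{0\}$ put $\hat e_\tau:=e_\tau/\|e_\tau\|_{\tau,y}$, which lies on the unit sphere, apply the sphere estimate to $\hat e_\tau$, and multiply through by the positive scalar $\|e_\tau\|_{\tau,y}$, using homogeneity of both $\|\cdot\|_{\tau,\xi}$ and $\|\cdot\|_{\tau,y}$. This gives exactly the two inequalities in the statement. I do not expect a genuine obstacle here; the only subtle point is the role of \emph{uniform} continuity over the sphere: ordinary continuity of $\|\cdot\|$ would only produce, for each direction $e_\tau$, its own neighborhood $V_{e_\tau}$, and there would be no reason for the intersection over all directions to remain a neighborhood of $y$ — it is precisely fibrewise uniform continuity that delivers a single $V$ working simultaneously for the whole unit sphere.
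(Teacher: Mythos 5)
Your argument is correct and is essentially the paper's proof, just written out in more detail: the paper likewise reduces to the unit sphere $\|e_\tau\|_{\tau,y}=1$ by homogeneity and then reads the estimate off from \eqref{eq:unicontsup}. Your closing remark on why fibrewise \emph{uniform} continuity (rather than mere continuity) is needed to get a single neighborhood $V$ for all directions is a nice, accurate observation.
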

\begin{proof}
 Since $\|\lambda e_\tau\|_{\tau,y}=|\lambda|\|e_\tau\|_{\tau,y}$ we may assume $\|e_\tau\|_{\tau,y}=1$. Now our result is a direct consequence of \eqref{eq:unicontsup}.
\end{proof}
 To define a metric (in the sense of topology) on a manifold $\X$ we will consider a continuous norm on the tangent bundle $T\X$. The following integral over a (piecewise) $C^1$-curve $\alpha : [a,b] \rightarrow \X$ is well defined, because the integrand is (piecewise) continuous on $[a,b]$:
\begin{equation*}
    L(\alpha) := \int_a^b \|\alpha^\prime (t)\|_{ \alpha (t)} \, dt. 
\end{equation*}
The value of this integral can be interpreted as the length of the curve $\alpha$. The following derivations are analogous to the construction of a Riemannian distance (cf. e.g. \cite[VII §6]{Lang}), and generalize them from the case of a Hilbert space norm to the case of a Banach space norm. 

To measure the \grqq distance\grqq \ between $x$ and $y$ on $\X$, we form the infimum over the lengths of all curves connecting $x$ and $y$.
If $x$ and $y$ cannot be connected, we set $d(x,y) = \infty$. According to this we define the map
\begin{align*}
    d : \X \times \X &\rightarrow \overline{\R} := \R \cup \lbrace \infty \rbrace \\
    (x,y) &\mapsto \inf \lbrace L(\alpha) \ | \ \alpha:[a,b] \rightarrow \X \text{ piecewise $C^1$-curve s.t. } \alpha (a) = x, \alpha (b) = y \rbrace.
\end{align*}
In the following we will always assume that $\X$ is connected. Then most of the axioms of a metric follow readily. Symmetry $d(x,y)=d(y,x)$  and non-negativity are obvious from the definition. It is known from topology that on a connected manifold, two points can always be connected by a piecewise smooth curve \cite{topology}, and thus $d(x,y) < \infty \ \forall x,y \in \X$. The triangle inequality follows straightforwardly from the fact that the length of curves adds up if they are concatenated.

It thus remains to show positive definiteness of $d$, which is the only property of a metric that requires fibrewise uniform continuity. 
To this end, we first prove some results which relate the metric to the  norm, locally. 
Let $(U,\phi)$ be a local chart at $x_0\in \X$ and $\Phi: \pi^{-1}(U) \to U\times \mathbb{X}$ be the corresponding natural vector bundle chart on the tangent bundle $\pi : T\X \to \X$.  We define the \textit{open ball with radius} $r>0$ \textit{around} 0 \textit{with respect to the local norm} $\Vert\cdot \Vert_{x_0}$ (with representation $\Vert\cdot\Vert_{\Phi, x_0}$) as 
\[
    rB_{x_0} := \{ v \in \mathbb X : \Vert v\Vert_{\Phi,x_0} < r \}.
\]
Note that in this special case the model space $\mathbb{E}$ of the vector bundle coincides with the model space $\mathbb{X}$ of the manifold. In particular, applying $\Vert\cdot\Vert_{\Phi, x_0}$ to chart representations of elements of $\X$ is well defined.
\begin{lemma}
    \label{lemma:normMetrik}
    Let $x_0 \in \X$ and $(U,\phi)$ a local chart, such that  $\phi(x_0) = 0$. Let $x,y \in U$ with representatives $x_\phi$ and $y_\phi$ in the chart. Let $\varepsilon >0$. Then there exists a radius $r>0$ such that for every $C^1$-curve $\alpha :[a,b] \to \X$ connecting $x$ and $y$ the following holds:
    \begin{enumerate}
    \item[(a)] If the image $\alpha_\phi : [a,b]\to \mathbb{X}$ lies in the open ball $rB_{x_0}$, in particular $x_\phi, \; y_\phi \in rB_{x_0}$, then the following properties hold:
    \begin{enumerate}
        \item[(i)] $(1-\varepsilon) \Vert x_\phi - y_\phi\Vert_{\Phi, x_0} \leq L(\alpha)$
        \item[(ii)] $d(x,y) \leq (1+\varepsilon)\Vert x_\phi - y_\phi\Vert_{\Phi,x_0}$
    \end{enumerate}
    \item[(b)] If $\alpha_\phi$ is leaving the ball $rB_{x_0}$, i.e. $\alpha_\phi(a)=x_\phi \in rB_{x_0}$ but $\alpha_\phi \not\subset rB_{x_0}$, then it holds that
    \[
        L(\alpha) \geq (1-\varepsilon)(r - \Vert x_\phi\Vert_{\Phi, x_0}).
    \]
\end{enumerate}
\end{lemma}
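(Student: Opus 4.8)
The statement is a local comparison between the intrinsic distance $d$ and the frozen local norm $\Vert\cdot\Vert_{\Phi,x_\star}$, read off in a single chart around $x_\star$. The strategy is to work entirely inside the chart $\phi$, where $\alpha_\phi:[a,b]\to\mathbb X$ is a (piecewise) $C^1$-curve in a Banach space, and to transfer length estimates between the moving local norm $\Vert\cdot\Vert_{\Phi,\alpha(t)}$ (which appears in $L(\alpha)$) and the fixed norm $\Vert\cdot\Vert_{\Phi,x_\star}$. The tool that makes this possible is Lemma~\ref{normEquivalence}: given $\varepsilon$, there is a neighborhood $V$ of $x_\star$ on which $(1-\varepsilon)\Vert v\Vert_{\Phi,x_\star}\le \Vert v\Vert_{\Phi,\xi}\le(1+\varepsilon)\Vert v\Vert_{\Phi,x_\star}$ for all $v\in\mathbb X$. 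Choose $r>0$ small enough that the closed ball $\overline{rB_{x_\star}}$ is contained in $\phi(V\cap U)$; this is the $r=r(\varepsilon)$ of the statement.

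For part (a)(i): if $\alpha_\phi([a,b])\subset rB_{x_\star}$, then for every $t$ the point $\alpha(t)$ lies in $V$, so $\Vert\alpha_\phi'(t)\Vert_{\Phi,\alpha(t)}\ge(1-\varepsilon)\Vert\alpha_\phi'(t)\Vert_{\Phi,x_\star}$. Integrating and using that $L(\alpha)=\int_a^b\Vert\alpha_\phi'(t)\Vert_{\Phi,\alpha(t)}\,dt$ gives $L(\alpha)\ge(1-\varepsilon)\int_a^b\Vert\alpha_\phi'(t)\Vert_{\Phi,x_\star}\,dt$, and the Banach-space estimate $\int_a^b\Vert\alpha_\phi'(t)\Vert_{\Phi,x_\star}\,dt\ge\Vert\alpha_\phi(b)-\alpha_\phi(a)\Vert_{\Phi,x_\star}=\Vert y_\phi-x_\phi\Vert_{\Phi,x_\star}$ finishes it. For part (a)(ii): pick the specific curve $\alpha(t)=\phi^{-1}(x_\phi+t(y_\phi-x_\phi))$, $t\in[0,1]$, the straight segment in the chart; since $rB_{x_\star}$ is convex and contains $x_\phi,y_\phi$, this segment stays in $rB_{x_\star}\subset V$, so $\Vert\alpha_\phi'(t)\Vert_{\Phi,\alpha(t)}\le(1+\varepsilon)\Vert y_\phi-x_\phi\Vert_{\Phi,x_\star}$ for every $t$, hence $d(x,y)\le L(\alpha)\le(1+\varepsilon)\Vert y_\phi-x_\phi\Vert_{\Phi,x_\star}$ by definition of $d$ as an infimum.

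For part (b): suppose $\alpha_\phi(a)=x_\phi\in rB_{x_\star}$ but $\alpha_\phi$ exits the ball. Let $c:=\inf\{t\in[a,b]:\Vert\alpha_\phi(t)\Vert_{\Phi,x_\star}= r\}$ be the first exit time; by continuity $\Vert\alpha_\phi(c)\Vert_{\Phi,x_\star}=r$ and $\alpha_\phi([a,c])\subset\overline{rB_{x_\star}}\subset V$. Apply the estimate of (a)(i) to the restricted curve $\alpha|_{[a,c]}$: $L(\alpha)\ge L(\alpha|_{[a,c]})\ge(1-\varepsilon)\Vert\alpha_\phi(c)-x_\phi\Vert_{\Phi,x_\star}\ge(1-\varepsilon)\bigl(\Vert\alpha_\phi(c)\Vert_{\Phi,x_\star}-\Vert x_\phi\Vert_{\Phi,x_\star}\bigr)=(1-\varepsilon)(r-\Vert x_\phi\Vert_{\Phi,x_\star})$, using the reverse triangle inequality.

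The only genuinely delicate point is making sure the neighborhood from Lemma~\ref{normEquivalence} is handled correctly: that lemma gives the norm equivalence on all of $\mathbb E=\mathbb X$ once $\xi$ ranges over a neighborhood $V$ of $x_\star$, so the single choice of $r$ (shrinking $rB_{x_\star}$ into $V\cap U$) suffices simultaneously for all directions $v$, and no issue arises from the infinite-dimensionality of $\mathbb X$ — the uniform continuity around the unit sphere built into the definition of a continuous norm is exactly what was needed and has already been exploited in the proof of Lemma~\ref{normEquivalence}. A minor bookkeeping point: the piecewise-$C^1$ case reduces to the $C^1$ case by summing over the smooth pieces, so it is enough to argue for $C^1$ curves as above.
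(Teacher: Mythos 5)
Your proposal is correct and follows essentially the same route as the paper: Lemma~\ref{normEquivalence} gives the norm equivalence on a neighborhood $V$, $r$ is chosen so the ball fits inside $\phi(V)$, (a) follows by integrating the equivalence and taking the chart-straight segment for the upper bound, and (b) uses the first exit (the paper's first intersection with the sphere $r\mathbb S_{x_\star}$) plus the reverse triangle inequality. Your extra care with the closed ball $\overline{rB_{x_\star}}\subset\phi(V\cap U)$ at the exit time is a minor tightening of the same argument, not a different approach.
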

\begin{proof}
    Since $\Vert \cdot \Vert$ is a continuous norm on $T\X$, according to Lemma \ref{normEquivalence} for $ \varepsilon>0$ there exists a neighborhood $V$ of $x_0$, such that 
    \[
        (1-\varepsilon)\|v\|_{\Phi,x_0} \le \|v\|_{\Phi,\xi} \le (1+\varepsilon)\|v\|_{\Phi,x_0}\; \forall \xi \in V, v \in \mathbb X.
    \]
    The image of $V$ in the chart contains a $\|\cdot\|_{\Phi,x_0}$-ball $r B_{x_0}$ around $\phi(x_0) = 0$ with radius $r = r(\varepsilon)>0$. 
    Let $x,y\in U$ and $\alpha: [a,b]\to\X$ be a $C^1$-curve connecting $x$ and $y$.
    \begin{enumerate}
        \item[(a)] Assume that the image $\alpha_\phi$ lies in $rB_{x_0}$. Using that the curve integral in vector spaces is always greater than or equal to the distance of the end points, we can estimate the length of $\alpha$ by
    \begin{align*}
        (1-\varepsilon)\Vert x_\phi - y_\phi\Vert_{\Phi,x_0}&\leq (1-\varepsilon) \int_a^b \Vert\alpha_\phi^\prime(t)\Vert_{\Phi, x_0} \; dt \overset{\alpha(t)\in V}{\leq} \int_a^b \Vert \alpha^\prime(t)\Vert_{\alpha(t)} \; dt \\
        &= L(\alpha) \leq (1+\varepsilon) \int_a^b \Vert\alpha_\phi^\prime(t)\Vert_{\Phi, x_0} dt.
    \end{align*}
    Choosing the connecting line of $x_\phi$ and $y_\phi$ for $\alpha_\phi$ this yields $d(x,y) \leq (1+\varepsilon) \Vert x_\phi - y_\phi\Vert_{\Phi,x_0}$.
    
    \item[(b)] If $\alpha_\phi$ leaves the open ball $rB_{x_0}$, there must be a first intersection point $s_\phi = \alpha_\phi(c)$, $c\in[a,b]$, with the sphere $r\mathbb S_{x_0}\subset \mathbb{X}$ around $0$. Let $\widetilde{\alpha}_\phi$ be the part of $\alpha_\phi$ connecting $x_\phi$ and $s_\phi$. Then by the norm equivalence and the inverse triangle inequality:
    \begin{align*}
        L(\widetilde{\alpha}) = \int_a^c \Vert \widetilde{\alpha}^\prime(t)\Vert_{\widetilde{\alpha}(t)} dt &\geq (1-\varepsilon) \int_a^c\Vert \widetilde{\alpha}_\phi^\prime(t)\Vert_{\Phi, x_0} dt \\
        &\geq (1-\varepsilon)(\Vert s_\phi\Vert_{\Phi,x_0} - \Vert x_\phi\Vert_{\Phi, x_0}) = (1-\varepsilon)(r-\Vert x_\phi\Vert_{\Phi, x_0}).
    \end{align*}
    Thus, we get $L(\alpha) \geq (1-\varepsilon)(r-\Vert x_\phi\Vert_{\Phi, x_0})$.
\end{enumerate}
\end{proof}
\begin{proposition}
    \label{prop:normMetrik}
    Let $x_0 \in \X$ and $(U,\phi)$ a local chart and
    let $\varepsilon>0$.
Then there is a neighborhood $W\subseteq U$ of $x_0$ such that
    \begin{equation*}
        (1-\varepsilon)\,\|x_\phi - y_\phi \|_{\Phi, x_0} \leq d(x,y) \leq (1+\varepsilon) \, \|x_\phi - y_\phi \|_{\Phi, x_0} \ \forall x,y \in W,
    \end{equation*}
    where $x_\phi, y_\phi \in \mathbb{X}$ are representatives of $x$ and $y$ in the chart.
\end{proposition}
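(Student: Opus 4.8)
The plan is to use Lemma \ref{lemma:normMetrik} directly, since it already does almost all of the work; what remains is to choose the neighborhood $W$ small enough that curves which stay near $x_\star$ are the only ones that can realize (up to $\varepsilon$) the infimum defining $d(x,y)$. First I would apply Lemma \ref{lemma:normMetrik} with the given $\varepsilon$ to obtain a radius $r = r(\varepsilon) > 0$ such that parts (a) and (b) hold for all $C^1$-curves in the chart. Then I would set $W := \{x \in U : \|x_\phi\|_{\Phi,x_\star} < r/4\}$, say, which is an open neighborhood of $x_\star$ since $\phi(x_\star) = 0$.

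For the upper bound, let $x,y \in W$. The straight segment in the chart from $x_\phi$ to $y_\phi$ has $\|\cdot\|_{\Phi,x_\star}$-norm bounded by $\|x_\phi\|_{\Phi,x_\star} + \|y_\phi\|_{\Phi,x_\star} < r/2$ along its whole length, so it stays inside $rB_{x_\star}$. Hence part (a)(ii) applies and gives $d(x,y) \le (1+\varepsilon)\|x_\phi - y_\phi\|_{\Phi,x_\star}$.

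The lower bound is the more delicate direction and the main obstacle, because the infimum in the definition of $d(x,y)$ ranges over \emph{all} piecewise $C^1$-curves in $\X$, including ones that wander far from the chart $U$. The idea is a dichotomy argument: for any curve $\alpha$ from $x$ to $y$ with $x,y \in W$, either its chart image $\alpha_\phi$ stays inside $rB_{x_\star}$, in which case part (a)(i) yields $L(\alpha) \ge (1-\varepsilon)\|x_\phi - y_\phi\|_{\Phi,x_\star}$; or $\alpha_\phi$ leaves $rB_{x_\star}$ (or $\alpha$ leaves $U$ entirely, which forces $\alpha_\phi$ to leave $rB_{x_\star}$ first since $rB_{x_\star} \subset \phi(U)$), in which case part (b) gives $L(\alpha) \ge (1-\varepsilon)(r - \|x_\phi\|_{\Phi,x_\star}) \ge (1-\varepsilon)\cdot \tfrac{3r}{4}$. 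Since $x,y \in W$ forces $\|x_\phi - y_\phi\|_{\Phi,x_\star} < r/2 < 3r/4$, the second alternative produces a strictly larger lower bound than the first, so in all cases $L(\alpha) \ge (1-\varepsilon)\|x_\phi - y_\phi\|_{\Phi,x_\star}$. Taking the infimum over all such $\alpha$ gives $d(x,y) \ge (1-\varepsilon)\|x_\phi - y_\phi\|_{\Phi,x_\star}$, completing the proof. The only subtlety to handle carefully is that a general piecewise $C^1$-curve need not lie in $U$ at all, so one must argue that leaving $U$ implies leaving $rB_{x_\star}$ in the chart before the curve exits $U$, which is exactly what makes part (b) applicable (applied to the initial sub-arc up to the first exit from $rB_{x_\star}$).
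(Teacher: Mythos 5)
Your proposal is correct and follows essentially the same route as the paper's proof: apply Lemma \ref{lemma:normMetrik}, combine the two lower bounds from (a)(i) and (b) into a minimum over all connecting curves, and shrink $W$ to a small ball ($r/4$ instead of the paper's $r/3$) so that the escape cost $(1-\varepsilon)(r-\|x_\phi\|_{\Phi,x_\star})$ dominates $(1-\varepsilon)\|x_\phi-y_\phi\|_{\Phi,x_\star}$. Your explicit handling of curves that leave $U$ (they must first exit $rB_{x_\star}$, so part (b) applies to the initial sub-arc) is a detail the paper leaves implicit, but the argument is the same.
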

\begin{proof}
W.l.o.g. assume $\phi(x_0) = 0$. By Lemma \ref{lemma:normMetrik}, we directly obtain
\[
    d(x,y) \leq (1+\varepsilon) \|x_\phi - y_\phi \|_{\Phi, x_0} \; \forall x,y \in \phi^{-1}(rB_{x_0}),
\]
where $r>0$ is chosen sufficiently small. Now let $\alpha$ be an arbitrary curve connecting $x$ and $y$. Lemma \ref{lemma:normMetrik} yields
\begin{align*}
    \alpha_\phi \subset rB_{x_0} &\Rightarrow L(\alpha) \geq (1-\varepsilon)\Vert x_\phi - y_\phi\Vert_{\Phi, x_0}, \\
    \alpha_\phi \not\subset rB_{x_0} &\Rightarrow L(\alpha) \geq (1-\varepsilon)(r-\Vert x_\phi\Vert_{\Phi, x_0}).
\end{align*}
Thus, we obtain
\begin{equation}
    \label{minAbschMetrik}
    d(x,y) \geq (1-\varepsilon)\cdot \min\{\Vert x_\phi - y_\phi\Vert_{\Phi, x_0}, r-\Vert x_\phi \Vert_{\Phi, x_0}\}.
\end{equation}
Choosing $W := \phi^{-1}(\frac{r}{3}B_{x_0})$ yields $d(x,y) \geq (1-\varepsilon) \Vert x_\phi - y_\phi\Vert_{\Phi, x_0}$ and thus our claim.
\end{proof}
\begin{corollary}
    Let $\X$ be a connected $C^1$-Banach manifold with the Hausdorff property. Then $d$ defines a metric on $\X$. Moreover, the induced topology coincides with the topology on $\X$ induced by the atlas.
\end{corollary}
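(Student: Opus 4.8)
The plan is to derive the corollary directly from Proposition~\ref{prop:normMetrik}, which already localizes $d$ against a fixed local norm around any point. I would organize the argument into two parts: first establishing that $d$ is a metric (the only missing axiom being positive definiteness), and second establishing that the metric topology agrees with the manifold topology.

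For positive definiteness, let $x \neq y$ in $\X$. Since $\X$ is Hausdorff, I can separate the two points, and in particular there is a local chart $(U,\phi)$ around $x_\star := x$ with $y \notin U$ (shrink $U$ if necessary so that $\phi^{-1}(rB_{x_\star}) \subset U$ for the $r$ supplied by Proposition~\ref{prop:normMetrik}). Then any curve $\alpha$ connecting $x$ and $y$ must leave the ball $rB_{x_\star}$, so by Lemma~\ref{lemma:normMetrik}(b) its length satisfies $L(\alpha) \geq (1-\varepsilon)(r - \|x_\phi\|_{\Phi,x_\star}) = (1-\varepsilon)r > 0$, since $x_\phi = \phi(x_\star) = 0$. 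Taking the infimum over all such $\alpha$ gives $d(x,y) \geq (1-\varepsilon)r > 0$. Combined with the symmetry, non-negativity, finiteness (connectedness) and triangle inequality already noted in the text, this shows $d$ is a metric.

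For the topological claim, I would fix $x_\star \in \X$ and a chart $(U,\phi)$ with $\phi(x_\star)=0$, and use Proposition~\ref{prop:normMetrik} with, say, $\varepsilon = 1/2$: on a neighborhood $W$ of $x_\star$ one has $\tfrac12\|x_\phi\|_{\Phi,x_\star} \le d(x,x_\star) \le \tfrac32\|x_\phi\|_{\Phi,x_\star}$ (setting $y = x_\star$). The right inequality shows every chart-neighborhood of $x_\star$ contains a $d$-ball around $x_\star$, so the manifold topology is finer than the metric topology; the left inequality shows every $d$-ball around $x_\star$ (intersected with $W$) contains a chart-neighborhood, so the metric topology is finer than the manifold topology. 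Since $x_\star$ was arbitrary and both topologies have these as neighborhood bases at each point, they coincide. One should note that all $\|\cdot\|_{\Phi,x_\star}$-balls are genuine open sets in the manifold topology because $\phi$ is a homeomorphism and $\|\cdot\|_{\Phi,x_\star}$ is equivalent to the Banach norm $\|\cdot\|_{\mathbb X}$ (property i) of a continuous norm), hence induces the chart topology.

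The main obstacle is really bookkeeping rather than mathematics: one must be careful that the radius $r$ from Proposition~\ref{prop:normMetrik} and Lemma~\ref{lemma:normMetrik} depends on the chosen chart and on $\varepsilon$, and that for positive definiteness one genuinely needs the Hausdorff hypothesis to guarantee a chart around $x$ missing $y$ — without it, distinct points could be topologically indistinguishable and $d$ would only be a pseudometric. A minor subtlety worth a sentence is that Proposition~\ref{prop:normMetrik} as stated compares $d(x,y)$ for $x,y$ both near $x_\star$; for the positive-definiteness step with $y$ far away one should instead invoke Lemma~\ref{lemma:normMetrik}(b) directly, as I did above, rather than the proposition. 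Everything else follows from estimates already proved.
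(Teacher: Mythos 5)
Your proposal follows essentially the same route as the paper's own proof: positive definiteness comes from the Hausdorff property together with the lower bound of Lemma \ref{lemma:normMetrik}(b) (the paper invokes the equivalent estimate \eqref{minAbschMetrik}), and the equality of topologies comes from the two-sided comparison of $d$ with the chart norm in Proposition \ref{prop:normMetrik} plus the equivalence of $\Vert\cdot\Vert_{\Phi,x_\star}$ with $\Vert\cdot\Vert_{\mathbb X}$. The deviations (demanding $y\notin U$ instead of merely $y\notin\phi^{-1}(rB_x)$, fixing $\varepsilon=1/2$) are cosmetic, so the proposal is correct and matches the paper's argument.
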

\begin{proof}
    Let $x \neq y$. Let $(U,\phi)$ be a local chart at $x$ such that w.l.o.g. $\phi (x) = x_\phi = 0$, and let $\Phi : \pi^{-1}(U) \rightarrow U \times \mathbb{X}$ be the natural vector bundle chart on the tangent bundle $\pi:T\X \rightarrow \X$.\\
    Since $\X$ satisfies the Hausdorff property and $x\neq y$ holds, there is a $\|\cdot\|_{\mathbb X}$-ball, and by equivalence of norms also a
    $\|\cdot\|_{\Phi, x}$-ball $rB_{x}$ around $0$ with radius $r>0$ such that $y \not\in \phi^{-1}(rB_{x})$. Possibly decreasing $r$ first, we obtain by (\ref{minAbschMetrik})
    \[
        d(x,y) \geq (1-\varepsilon) \cdot \min\{\Vert y_\phi\Vert_{\Phi, x}, r\} = (1-\varepsilon)\cdot r > 0.
    \]
    Moreover, for $y \in \phi^{-1}(r\mathbb{S}_{x})$ we obtain by Prop. \ref{prop:normMetrik}
    \[
        (1-\varepsilon)\cdot \Vert y_\phi\Vert_{\Phi, x} \leq d(x,y) \leq (1+\varepsilon) \cdot \Vert y_\phi\Vert_{\Phi, x}.
    \]
    Since $\Vert \cdot \Vert_{\Phi, x}$ is equivalent to $\Vert \cdot \Vert_{\mathbb{X}}$, this implies that every $\mathbb{X}$-ball contains a $d$-ball and vice versa. Thus, the topology induced by $d$ coincides with the topology induced by the atlas of $\X$.
\end{proof}
Now consider a retraction $R :T\X \rightarrow \X$. We will relate norms of its preimages on the tangent spaces, to the distance of its images on $\X$, measured on the corresponding metric. First of all, we establish existence and smoothness of an inverse:

\begin{proposition}
\label{inverseRetr}
    Let $\X$ be a $C^1$-Banach manifold, $x_0 \in \X$ and let $R$ be a $C^1$-retraction. 
    Let $(V, \phi)$ be a chart of $\X$ at $x_0$ and $\Phi$ be the corresponding natural tangent bundle trivialization. 
    
    Then for any $\varepsilon >0 $ there exists a neighborhood $U\subset V$ of $x_0$, such that the following holds:
    \begin{itemize}
     \item[i)]  A local inverse $R_z^{-1} : U \to T_z\X$ exists for each $z\in U$. Denote their chart representation by 
    \begin{equation}
        \label{eq:inverseRetractionRepresentation}
        R_{z,\phi}^{-1} : \phi(U) \to \mathbb{X}.
    \end{equation}    
    \item[ii)] For all $x, y, z \in U$ we have the estimate:
    \begin{equation}
        \label{eq:strictDifferentiabilityInverseRetraction}
        \Vert x_\phi-y_\phi - (R_{z,\phi}^{-1}(x_\phi)-R_{z,\phi}^{-1}(y_\phi))\Vert_{\Phi, x_0} \leq \varepsilon \Vert x_\phi - y_\phi \Vert_{\Phi, x_0}.
    \end{equation}
    \end{itemize}
\end{proposition}
\begin{proof}
    Application of the inverse function theorem to the chart representation $\widehat R_\phi$ of the mapping
    \begin{align*}
        \widehat{R} : T\X &\to \X \times \X \\
        (z,v) &\mapsto (z, R_z(v))
    \end{align*}
    yields that $\widehat R_\phi : \phi(V)\times \mathbb X\to \mathbb X \times \mathbb X$ is a local diffeomorphism, and it holds $(z_\phi,R_{z,\phi}^{-1}(x_\phi))=\widehat R_\phi^{-1}(z_\phi,x_\phi)$.
    Since $\widehat R_\phi^{-1}$ is continuously differentiable and thus strictly differentiable we obtain a neighborhood $W \subset \mathbb X\times \mathbb X$ of $(x_{0, \phi},x_{0, \phi})$, such that for $(z_\phi,x_\phi),(z_\phi,y_\phi)\in W$ we have, using $(z_\phi,x_\phi)-(z_\phi,y_\phi)=(0,x_\phi-y_\phi)$ and denoting by $[v]_2$ the second component of $v\in \mathbb X\times \mathbb X$:
        \begin{equation}
        \label{eq:strictDifferentiabilityInverseRetractionInCharts}
        \Vert [(\widehat R_\phi^{-1})'(x_{0, \phi},x_{0, \phi})(0,x_\phi-y_\phi) - (\widehat R_\phi^{-1}(z_\phi,x_\phi)-\widehat R_\phi^{-1}(z_\phi,y_\phi))]_2\Vert_{\Phi, x_0} \leq \varepsilon \Vert x_\phi - y_\phi \Vert_{\Phi, x_0}.
    \end{equation}
    Taking into account that $(R_{x_0,\phi}^{-1})'(x_{0, \phi}) = (R_{x_0,\phi}'(R_{x_0,\phi}^{-1}(x_0, \phi)))^{-1} = Id_{\mathbb{X}}$, this in turn implies \eqref{eq:strictDifferentiabilityInverseRetraction}, if we choose $U$, such that for $x,y,z\in U$ we get $(z_\phi,x_\phi),(z_\phi,y_\phi)\in W$.
\end{proof}

\begin{proposition}
    \label{prop:RetraktionMetric}
    To $x_0 \in \X$ and $\varepsilon>0$ there exists a neighborhood $U$ of $x_0$ such that for all $x,y,z \in U$ the following holds:
    \begin{equation*}
        (1-\varepsilon) \|R^{-1}_z(x) - R^{-1}_z(y)\|_z \leq d(x,y) \leq (1+\varepsilon) \|R^{-1}_z(x) - R^{-1}_z(y)\|_z.
    \end{equation*}
\end{proposition}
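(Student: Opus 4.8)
The plan is to reduce the statement about the metric $d$, the retraction $R$, and the local norms $\|\cdot\|_z$ to the already-established Proposition \ref{prop:normMetrik}, which compares $d$ with a fixed local norm in a chart. The key observation is that $R^{-1}_z(x)$ is, for $z$ near $x_\star$, exactly a chart representation of $x$: by Proposition \ref{inverseRetr}, for each $z$ in a small enough neighborhood the map $R_z$ is a diffeomorphism from a neighborhood of $0_z \in T_z\X$ onto a neighborhood of $z$ in $\X$, so $(R_z^{-1}, $ together with a fixed linear identification $T_z\X \cong \mathbb X)$ provides a chart of $\X$ centered at $z$. Moreover the chart-dependent norm on $T_z\X$ used in Proposition \ref{prop:normMetrik} is precisely $\|\cdot\|_z$ in this chart, because the natural vector bundle chart associated to $R_z^{-1}$ has $R_z^{-1}(z) = 0_z$ and $(R_z^{-1})'(z) = (R_z'(0_z))^{-1} = \mathrm{Id}_{T_z\X}$ by property (ii) of a retraction, so the tangent identification at the center is the identity.

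So the first step is: fix $z$ and apply Proposition \ref{prop:normMetrik} with $x_\star$ replaced by $z$ and the chart taken to be the retraction chart described above. This yields, for each fixed $z$, a neighborhood $W_z$ of $z$ on which $(1-\varepsilon')\|R_z^{-1}(x) - R_z^{-1}(y)\|_z \le d(x,y) \le (1+\varepsilon')\|R_z^{-1}(x)-R_z^{-1}(y)\|_z$. The second step, which is the real work, is to make this estimate hold on a single neighborhood $V$ of $x_\star$ uniformly in the base point $z\in V$. Naively, Proposition \ref{prop:normMetrik} gives a $z$-dependent neighborhood $W_z$, and one must rule out that $W_z$ shrinks to nothing as $z$ varies. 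This requires a uniformity/compactness-style argument: one works in a single fixed chart $(U,\phi)$ around $x_\star$, expresses the retraction $R$ and its inverse in that chart as a $C^1$ map on an open subset of $\mathbb X \times \mathbb X$, and uses the continuity of the data — the norm $\|\cdot\|$ via Lemma \ref{normEquivalence} (fibrewise uniform continuity giving norm equivalence on a neighborhood independent of the base point up to the $\varepsilon$ factor), and the $C^1$-dependence of $R_z^{-1}$ on $z$ — to extract radii that can be chosen uniformly for $z$ ranging over a fixed small ball around $x_\star$.

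Concretely I would: (1) fix the chart $(U,\phi)$ with $\phi(x_\star)=0$ and the associated vector bundle chart $\Phi$; (2) invoke Lemma \ref{normEquivalence} to get, for the target tolerance, a ball $V_0 = \phi^{-1}(r_0 B_{x_\star})$ on which $\|\cdot\|_{\Phi,\xi}$ and $\|\cdot\|_{\Phi,x_\star}$ are mutually $(1\pm\varepsilon)$-close for all $\xi\in V_0$; (3) express $R^{-1}$ in the chart and, using its continuous differentiability and $(R_z^{-1})'(z)=\mathrm{Id}$, show that for $z,x,y$ in a sufficiently small common ball $V\subset V_0$ the chart-difference $R_z^{-1}(x) - R_z^{-1}(y)$ (transported to $\mathbb X$) equals $x_\phi - y_\phi$ up to a relative error bounded by $\varepsilon$ — this is a mean-value / Lipschitz estimate for the map $\xi\mapsto R_{z,\phi}^{-1}(\xi)$ whose derivative at $\xi = z_\phi$ is the identity; (4) combine with Proposition \ref{prop:normMetrik} applied in the fixed chart and with step (2) to convert between $\|\cdot\|_z$ and $\|\cdot\|_{\Phi,x_\star}$; (5) absorb all the accumulated $(1\pm\varepsilon_i)$ factors into a single $(1\pm\varepsilon)$ by choosing the tolerances $\varepsilon_i$ small at the outset. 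The main obstacle is precisely step (3)–(4): getting genuine uniformity in $z$ rather than a $z$-by-$z$ estimate, which forces one to re-run the comparison argument inside one ambient chart rather than quoting Proposition \ref{prop:normMetrik} as a black box; the continuity of $(z,\xi)\mapsto R_{z,\phi}^{-1}(\xi)$ and its derivative, together with the normalization at the center, is what makes this possible, but it must be set up carefully so that the neighborhood can be taken independent of $z$.
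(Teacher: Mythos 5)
Your proposal is correct and follows essentially the same route as the paper: work in one fixed chart at $x_\star$, show that $R_{z,\phi}^{-1}(x_\phi)-R_{z,\phi}^{-1}(y_\phi)$ equals $x_\phi-y_\phi$ up to a relative error uniform in $z$ (the paper gets this uniformity by invoking strict differentiability of the $C^1$ map $R^{-1}$ at $(x_\star,x_\star)$ together with $(R_{x_\star}^{-1})'(x_\star)=Id$, which is the same mechanism as your mean-value estimate on $\xi\mapsto R_{z,\phi}^{-1}(\xi)$), and then conclude via Proposition \ref{prop:normMetrik} and the norm comparison of Lemma \ref{normEquivalence}. Your explicit handling of the conversion between $\|\cdot\|_z$ and $\|\cdot\|_{\Phi,x_\star}$ only spells out what the paper leaves implicit, so there is no substantive difference.
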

\begin{proof}
    Choose a chart $(V,\phi)$ of $\X$ at $x_0$ and denote by $\Phi: \pi^{-1}(V) \to \mathbb{X}\times \mathbb{X}$ the natural vector bundle chart on the tangent bundle $\pi: T\X \to \X$. Let $x_{0,\phi}, \ x_\phi, \ y_\phi$ and $z_\phi$ be representatives of $x_0,x,y$ and $z$ in the chart. Let $R_{z, \phi}^{-1}$ be the chart representation of $R_z^{-1}$ according to \eqref{eq:inverseRetractionRepresentation}. We show that
    \begin{equation*}
        \lim_{x_\phi, y_\phi,z_\phi \rightarrow x_{0,\phi}}\frac{\|R_{z, \phi}^{-1}(x_\phi) - R_{z, \phi}^{-1}(y_\phi)\|_{\Phi, x_0}}{\|x_\phi - y_\phi\|_{\Phi, x_0}} = 1.
    \end{equation*}
    Using \eqref{eq:strictDifferentiabilityInverseRetraction} and the inverse triangle inequality, we obtain that for any $\varepsilon >0$ there exists a neighborhood $V$ of $x_0$, such that
    \begin{align*}
        \big|\|x_\phi-y_\phi\|_{\Phi,x_0}-\Vert R_{z,\phi}^{-1}(x_\phi) - R_{z,\phi}^{-1}(y_\phi)\Vert_{\Phi, x_0}\big| &\le \varepsilon\Vert x_\phi - y_\phi\Vert_{\Phi, x_0}.
    \end{align*}
    By possibly shrinking $V$,
    we obtain the desired assertion by Prop. \ref{prop:normMetrik}.
\end{proof}
\subsection{Newton- and Strict Differentiability in Geometric Form}
In order to analyse local convergence of Newton's method, we want to prove a result about a Newton- or strictly differentiable mapping $F:\X\to\E$ between a manifold $\X$ and a vector bundle $p:\E\to\M$, which can be stated independently of the choice of charts.

In the following, we assume that the mapping 
\[
    y = p\circ F : \X \to \M, \; y(x) := p(F(x))\in \M,
\]
which computes the base point of $F(x)\in\E$ for any $x\in \X$, is locally Lipschitz continuous. We can use the linear map $\VT{y}{\leftarrow}(y_0) \in L(E_{y_0},E_{y})$ given by a vector back-transport $\VT{y}{\leftarrow} \in \Gamma(\L(\E, E_{y}))$ to transport elements $F(x_0) \in E_{y_0}$ into the fibre $E_y$. Such a vector back-transport may be consistent to a connection map $Q \in \Gamma(\L(T\E, p^*\E))$, i.e., according to Def. \ref{def:consistentConnection}, we may have $Q_e = \langle \VT{y}{\leftarrow}\rangle'(e), \; e\in\E$. 
\begin{proposition}
    \label{QVNewtondiffbar}
    Let $F:\X \to \E$. Let $\VT{y}{\leftarrow}\in\Gamma(\L(\E, E_y))$ be a vector back-transport, $Q\in \Gamma(\L(T\E,p^*\E))$ be a connection map and $R:T\X\to\X$ be a $C^1$-retraction.
    \begin{enumerate}
        \item[(i)] \label{bruch}
        Let $F : \X \to \E$ be Newton-differentiable at $x_0\in\X$ with respect to $F^\prime \in \Gamma(\L(T\X,F^*T\E))$. If $\VT{y}{\leftarrow}$ is consistent with $Q$ or $F(x_0)=0$, then it holds
        \begin{equation*}
            \underset{x\to x_0}{\lim} \frac{\Vert(Q_{F(x)} \circ F^\prime(x)) (R_x^{-1}(x) - R_x^{-1}(x_0)) - (F(x) - \VT{y(x)}{\leftarrow}(y(x_0))F(x_0))\Vert_{E_{y(x)}}}{d_\X(x,x_0)} = 0.
        \end{equation*}
    \item[(ii)]
        Let $F$ be strictly differentiable at $x_0 \in \X$. If $\VT{y}{\leftarrow}$ is consistent with $Q$ or $F(x_0) = 0$ then
        for every $\varepsilon >0$ there exists a neighborhood $W$ of $x_0$ such that for all $x,\xi \in W$ it holds
        \begin{align*}
            \Vert (Q_{F(x_0)} \circ F'(x_0))(R_{x_0}^{-1}(x) &- R_{x_0}^{-1}(\xi)) - (\VT{y(x_0)}{\leftarrow}(y(x))F(x) - \VT{y(x_0)}{\leftarrow}(y(\xi))F(\xi))\Vert_{E_{y(x_0)}} \\
            &< \varepsilon \Vert R_{x_0}^{-1}(x) - R_{x_0}^{-1}(\xi) \Vert_{x_0}.
        \end{align*}
    \end{enumerate}
\end{proposition}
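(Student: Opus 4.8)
The plan is to reduce both parts of Proposition~\ref{QVNewtondiffbar} to a Newton- (resp.\ strict) differentiability remainder of an auxiliary fibre-valued composition, and then invoke the chain rules for Newton- and strict differentiability proved above together with the norm comparison results. Since every quantity occurring in the statement is intrinsic (the numerators are norms of elements of $E_{y(x)}$, resp.\ $E_{y(a)}$, and the denominators involve $d_\X$ or a local norm), I would fix a chart $(U,\phi)$ of $\X$ around the base point ($\xi$ in (i), $a$ in (ii)), a chart of $\M$ around its image under $y=p\circ F$, and a local trivialization $\tau$ of $\E$ there, and compute in coordinates. By Lemma~\ref{normEquivalence} and continuity of $y$, the fibre norms $\|\cdot\|_{E_{y(x)}}$ are uniformly equivalent to the model norm $\|\cdot\|_{\mathbb E}$ for $x$ near the base point, and by Propositions~\ref{prop:normMetrik} and \ref{prop:RetraktionMetric} the same holds between $d_\X$, the local norms, and $\|\cdot\|_{\mathbb X}$; hence the precise constants are irrelevant. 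Next I would dispose of the retraction: since $R$ is $C^1$ with $R_x(0_x)=x$ and $R_x'(0_x)=Id$, Proposition~\ref{inverseRetr} together with the inverse function theorem gives in coordinates
\[
 R_x^{-1}(x)-R_x^{-1}(\xi)=x_\phi-\xi_\phi+o(\|x_\phi-\xi_\phi\|_{\mathbb X})
\]
as $x\to\xi$, uniformly near the diagonal (for part (ii), where the retraction sits at the fixed $a$, this is simply strict differentiability of the $C^1$ map $R_a^{-1}$ at $a$); as $F'$ is locally bounded, propagating the $o(\cdot)$-term through $Q_{F(\cdot)}\circ F'(\cdot)$ is harmless, so the retraction may be replaced by the chart difference.

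Second, I would isolate the geometric content as an identity for the Newton operator. Put $G:=\NE{V^{-1}_{y(a)}}\circ F:\X\to E_{y(a)}$ (part (ii), with $a$ fixed) and $G_x:=\NE{V^{-1}_{y(x)}}\circ F:\X\to E_{y(x)}$ (part (i)). Since $\NE{S}$ is $C^1$ for every section $S$, the chain rules give that $G$ (resp.\ $G_x$ for fixed $x$) is Newton-/strictly differentiable with derivative $\NE{V^{-1}}'(F(\cdot))\circ F'(\cdot)$, and consistency of $V^{-1}$ with $Q$ at the base point --- which by \eqref{eq:Qtriv} and \eqref{eq:diffVtTriv} (Lemma~\ref{lem:VTconnection}) is automatic when $F$ vanishes there, because then $B_{\cdot,\tau}(0)=0$ --- yields
\[
 Q_{F(a)}\circ F'(a)=G'(a),\qquad Q_{F(x)}\circ F'(x)=G_x'(x).
\]
Using $\NE{V^{-1}_{y(a)}}(F(x))=V^{-1}_{y(a)}(y(x))F(x)$, $\NE{V^{-1}_{y(x)}}(F(x))=F(x)$ and $\NE{V^{-1}_{y(x)}}(F(\xi))=V^{-1}_{y(x)}(y(\xi))F(\xi)$, the bracket to be estimated becomes exactly a Newton-type remainder $G'(\cdot)(R^{-1}(x)-R^{-1}(\xi))-(G(x)-G(\xi))$. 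For part (ii) this half is essentially bookkeeping: $G$ is a genuine ($a$-independent) composition, strictly differentiable at $a$, so combining its strict remainder with the retraction estimate above gives $\|G'(a)(R_a^{-1}(x)-R_a^{-1}(\xi))-(G(x)-G(\xi))\|\le(\varepsilon_G+\|G'(a)\|\,\varepsilon')\|x_\phi-\xi_\phi\|$, and choosing $\varepsilon_G,\varepsilon'$ small and converting norms back (all constants fixed since $a$ is) gives the claim.

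The main obstacle is part (i): here $G_x$ depends on $x$ through the base point $y(x)$, so the evaluation point and the back-transport move together and one cannot simply invoke Newton-differentiability of a single map; the $x$-dependence of $G_x$ must be unraveled. I would expand the bracket of (i) in the trivialization $\tau$: writing $F_\tau,y_\phi$ for the fibre- and base-components of $F$ in coordinates, \eqref{eq:Qtriv}--\eqref{eq:diffVtTriv} give
\[
 Q_{F(x)}\circ F'(x)\sim F_\tau'(x_\phi)(\cdot)+\big((V^{-1}_{y(x),\tau})'(y(x))\,y_\phi'(x_\phi)(\cdot)\big)F_\tau(x_\phi),
\]
while $G_x(x)\sim F_\tau(x_\phi)$ and $G_x(\xi)\sim V^{-1}_{y(x),\tau}(y(\xi))F_\tau(\xi_\phi)$. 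Grouping, the bracket splits into an elementary term $F_\tau'(x_\phi)(x_\phi-\xi_\phi)-(F_\tau(x_\phi)-F_\tau(\xi_\phi))$, which is $o(\|x_\phi-\xi_\phi\|)$ by Newton-differentiability of the fibre component (plus the retraction reduction and local boundedness), and a ``transport wobble'' $(Id-V^{-1}_{y(x),\tau}(y(\xi)))F_\tau(\xi_\phi)$ together with the connection-correction above. The key cancellation: Taylor-expanding the wobble in the base-point variable and using Newton-differentiability of $y_\phi$, its first-order part equals $-\big((V^{-1}_{y(x),\tau})'(y(x))\,y_\phi'(x_\phi)(x_\phi-\xi_\phi)\big)F_\tau(\xi_\phi)$ up to $o(\|x_\phi-\xi_\phi\|)$, i.e.\ the connection-correction term but with $F_\tau(\xi_\phi)$ in place of $F_\tau(x_\phi)$; the two therefore combine to $\big((V^{-1}_{y(x),\tau})'(y(x))\,y_\phi'(x_\phi)(x_\phi-\xi_\phi)\big)(F_\tau(x_\phi)-F_\tau(\xi_\phi))$, a product of two $O(\|x_\phi-\xi_\phi\|)$-factors, hence $o(\|x_\phi-\xi_\phi\|)$. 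This last step needs that $(V^{-1}_{y(x),\tau})'(y(x))$ stays bounded as $x\to\xi$ --- which follows from the consistency relation and continuity of $Q$ (equivalently continuity of $B_{\cdot,\tau}$ in \eqref{eq:Qtriv}) --- and that $F$ and $y=p\circ F$ are locally Lipschitz; when consistency is assumed only at $F(\xi)$, continuity of $Q$ and of the back-transport family absorbs the variation of the evaluation point. Dividing by $d_\X(x,\xi)\asymp\|x_\phi-\xi_\phi\|$ then gives the limit $0$. In the alternative case $F(\xi)=0$ everything is cheaper: the wobble and the connection-correction both carry a factor of $F_\tau$ evaluated at $\xi$ or $x$ and are therefore $O(\|x_\phi-\xi_\phi\|^2)$, only the elementary term survives, and neither consistency nor any joint regularity of the family $(V^{-1}_y)_{y\in\M}$ is needed. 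I expect the explicit coordinate expansion and the identification of this cancellation to be the only genuinely delicate point; everything else is chain rules and the norm equivalences established above.
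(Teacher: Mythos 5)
Your proposal is correct, and it splits naturally into two halves relative to the paper. For part (i) you end up doing essentially the paper's computation: the paper also works in a chart/trivialization, expands $V^{-1}_{y(x),\eta}(y(x))-V^{-1}_{y(x),\eta}(y(\xi))$ around the moving base point, writes $F_\eta(x_\phi)-F_\eta(\xi_\phi)$ via the Newton remainder, and then exploits exactly the cancellation you identify --- it groups $-B_{y(x),\eta}(F_\eta(x_\phi))\delta y_\eta-((V^{-1}_{y(x),\eta})'(y(x)_\eta)\delta y_\eta)F_\eta(\xi_\phi)$ using bilinearity of $B$, the bracketed part vanishing by consistency or $F(\xi)=0$ and the rest being a product of two $\mathcal O(\|x_\phi-\xi_\phi\|)$ factors; your ``substitute consistency into $Q$ first, then Taylor-expand the transport wobble'' is the same cancellation in a slightly different order, and your explicit flagging of the boundedness of $(V^{-1}_{y(x),\tau})'(y(x))$ and of the uniformity of the transport remainder is, if anything, more careful than the paper, which uses local boundedness of $B_{y(\cdot),\eta}$ and the Lipschitz continuity of $y$ for the same purpose. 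For part (ii) your route is genuinely different and cleaner: the paper redoes the unified coordinate expansion with $a$ fixed and estimates the resulting five terms one by one, whereas you observe that the whole bracket is the strict-differentiability remainder of the single fixed-fibre composition $G=\NE{V^{-1}_{y(a)}}\circ F:\X\to E_{y(a)}$, with $G'(a)=Q_{F(a)}\circ F'(a)$ by consistency at $F(a)$ --- and your remark that this identity is automatic at the zero element (since $B_{y(a),\tau}(0)=0$ for a linear connection, matching the paper's implicit use of linearity of $Q$) unifies the two hypotheses ``consistent or $F(a)=0$'' at the operator level. This buys a shorter argument for (ii) that reuses the chain rule for strict differentiability and the retraction/norm comparisons (Propositions \ref{prop:normMetrik}, \ref{inverseRetr}, \ref{prop:RetraktionMetric}) instead of term-by-term estimates, while you correctly recognize that the same device fails for (i), where the back-transport and the evaluation point move together, so the coordinate cancellation is unavoidable there; no gap.
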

\begin{proof}
Choose a chart $(U, \phi)$ of $\X$ and a vector bundle chart $\eta$ of $\E$. 
Let $u,v,w\in U$. In the following, we use $\phi$ or $\eta$ as an index to denote the chart representation of elements of $\X$ or $\E$, respectively.
We begin by discussing the numerators in $(i)$ and $(ii)$ in a unified way and collecting all remainder terms which occur. Then, for the proof of $(i)$ we set $u=v=x$ and fix $w=x_0$. For the proof of $(ii)$ we will fix $u=x_0$ and consider $v=x, w=\xi$. With these notations the remainder terms read in both cases
\[
 r(u,v,w)=(Q_{F(u)} \circ F'(u))(R_{u}^{-1}(v) - R_{u}^{-1}(w)) - \big(\VT{y(u)}{\leftarrow}(y(v))F(v) - \VT{y(u)}{\leftarrow}(y(w))F(w)\big).
\]
Let $R_{u, \phi}^{-1}$ be a representation of the inverse retraction in charts given by \eqref{eq:inverseRetractionRepresentation}. Since $R_u^{-1}$ is a $C^1$-mapping with $(R_u^{-1})^\prime(u) =R_u'(R_u^{-1}(u))^{-1}= Id_{T_{u}\X}$, we obtain
\[
         R_{u, \phi}^{-1}(v_{\phi}) - R_{u, \phi}^{-1}(w_{\phi}) = (R^{-1}_{u, \phi})^\prime(u_\phi)(v_\phi - w_{\phi}) + r_{R^{-1}}  = (v_\phi - w_{\phi}) + r_{R^{-1}}     
\]
    where $r_{R^{-1}}$ is a remainder term. 
   Setting $\delta y_\eta := y(v)_\eta- y(w)_{\eta} \in \mathbb{Y}$, we obtain the chart representation of 
    \begin{equation*}\label{eq:zaehler1}
        (Q_{F(u)} \circ F^\prime(u))(R_{u}^{-1}(v) - R_{u}^{-1}(w))
    \end{equation*}
    by
    \begin{equation}
        \label{QFKarte}
        (F_{\eta,\phi})^\prime(u_\phi)(v_\phi - w_{\phi}+r_{R^{-1}}) - B_{y(u),\eta}(F_{\eta,\phi}(u_\phi))\delta y_\eta.
    \end{equation}
Next, we consider the representation of 
$\VT{y(u)}{\leftarrow}(y(v))F(v) - \VT{y(u)}{\leftarrow}(y(w))F(w)$,
which is of the form
\begin{equation}
    \label{zaehlerRepr}
    \VT{y(u),\eta}{\leftarrow}(y(v)_\eta)F_{\eta,\phi}(v_\phi) - \VT{y(u),\eta}{\leftarrow}(y(w)_\eta)F_{\eta,\phi}(w_\phi).
\end{equation}
Using the differentiability of the mapping $\VT{y(u),\eta}{\leftarrow} : \mathbb Y \to L(\mathbb E, \mathbb E)$,  we can write:
\begin{equation*}
    \VT{y(u),\eta}{\leftarrow}(y(v)_{\eta}) - \VT{y(u),\eta}{\leftarrow}(y(w)_{\eta})=\VTprime{{y(u),\eta}}{\leftarrow}(y(u)_\eta)\delta y_\eta + r_V
\end{equation*}
where $r_V$ denotes a remainder term. Inserting this into \eqref{zaehlerRepr} we obtain by linearity of $\VT{y(u)}{\leftarrow}(y(v))$ 
\begin{align}\label{eq:zaehler2rep}
    \eqref{zaehlerRepr}=&\;(\VTprime{{y(u),\eta}}{\leftarrow}(y(u)_\eta)\delta y_\eta)F_{\eta,\phi}(w_\phi)+\VT{{y(u),\eta}}{\leftarrow}(y(v)_{\eta})(F_{\eta,\phi}(v_\phi)-F_{\eta,\phi}(w_\phi))  + r_V F_{\eta,\phi}(w_\phi).
\end{align}
Writing the difference $F_{\eta,\phi}(v_\phi)-F_{\eta,\phi}(w_{\phi})$ as
\begin{equation*}\label{eq:remF}
    F_{\eta,\phi}(v_\phi)-F_{\eta,\phi}(w_{\phi}) = (F_{\eta,\phi})^\prime(u_\phi)(v_\phi - w_{\phi}) + r_F
\end{equation*}
and subtracting \eqref{eq:zaehler2rep} from \eqref{QFKarte}  we finally obtain the following representation of the numerators of $(i)$ and $(ii)$ in charts: 
\begin{equation}
    \label{eq:denominatorRepr}
    \begin{aligned}
        r(u,v,w)\sim r_{\eta,\phi}(u_\phi,v_\phi,w_\phi)&=(Id_{E_{y(u)}} - \VT{{y(u),\eta}}{\leftarrow}(y(v)_\eta))(F_{\eta,\phi})^\prime(u_\phi)(v_\phi - w_{\phi})\\
        &- B_{y(u),\eta}(F_{\eta,\phi}(u_\phi))\delta y_\eta - (\VTprime{{y(u),\eta}}{\leftarrow}(y(u)_\eta)\delta y_\eta)F_{\eta,\phi}(w_\phi) \\
        &+(F_{\eta,\phi})^\prime(u_\phi) r_{R^{-1}}-\VT{{y(u),\eta}}{\leftarrow}(y(v)_{\eta})r_F - r_VF_{\eta,\phi}(w_\phi).
    \end{aligned}
\end{equation}
We will discuss the three lines of \eqref{eq:denominatorRepr} separately.

In the Newton-differentiable case, where $u=v=x$ we have $\VT{{y(u)}}{\leftarrow}(y(v)) = Id_{E_y}$ and thus, the first line vanishes. In the strictly differentiable case $(F_{\eta,\phi})'(u_\phi)$ is fixed. Thus, we obtain that $(F_{\eta,\phi})^\prime(u_\phi)(v_\phi - w_{\phi})$ is of order $\mathcal{O}(\Vert v_\phi - w_\phi\Vert_{\mathbb{X}})$. 
Using that
$\VT{{y(u),\eta}}{\leftarrow}(y(v)_\eta) \to Id_{E_{y(u)}}$ for $y(v) \to y(u)$, and $x\mapsto y(x)$ is continuous, we get that $Id_{E_{y(u)}} - \VT{{y(u),\eta}}{\leftarrow}(y(v)_{\eta})$ tends to $0$ in the operator norm as $v\to u$. Thus, in both cases the first line of \eqref{eq:denominatorRepr} vanishes in the desired order for all choices of $u,v,w$ corresponding to the respective differentiability concepts.

Next, we have to discuss the second line of \eqref{eq:denominatorRepr}
 using our consistency assumption or $F(x_0)=0$.
If $\VT{y}{\leftarrow}$ is consistent with the connection map $Q$ we have
    \[
        B_{y(u),\eta}(F_{\eta,\phi}(u_\phi))\delta y_\eta = - (\VTprime{{y(u),\eta}}{\leftarrow}(y(u)_\eta)\delta y_\eta)F_{\eta,\phi}(u_\phi).
    \]
Thus, in this case the second line of \eqref{eq:denominatorRepr} simplifies to
    \begin{equation}\label{eq:Ncase1}
        (\VTprime{{y(u),\eta}}{\leftarrow}(y(u)_\eta)\delta y_\eta)(F_{\eta,\phi}(u_\phi) - F_{\eta,\phi}(w_\phi)).
    \end{equation}
Alternatively, we assume that $F(x_0)=0$. In the Newton differentiable case this yields $F_{\eta,\phi}(w_\phi) = 0$, in the strictly differentiable case we have that $F_{\eta,\phi}(u_\phi) = 0$. Thus, we can rewrite the second line of \eqref{eq:denominatorRepr} in the Newton-differentiable case as 
\begin{equation}\label{eq:Ncase2}
    -B_{y(u),\eta}(F_{\eta,\phi}(u_\phi)-F_{\eta,\phi}(w_\phi))\delta y_\eta,
    \end{equation}
and in the strictly differentiable case as
\begin{equation}\label{eq:Ncase3}
    (\VTprime{{y(u),\eta}}{\leftarrow}(y(u)_\eta)\delta y_\eta)(F_{\eta,\phi}(u_\phi) - F_{\eta,\phi}(w_\phi)).
    \end{equation}
In all the above cases \eqref{eq:Ncase1}-\eqref{eq:Ncase3}, using Lipschitz continuity of $F_{\eta,\phi}$, bilinearity of $\VTprime{{y(u),\eta}}{\leftarrow}(y(u)_\eta)$ or of $B_{y(u),\eta}$, and Lipschitz continuity of the mapping $x \mapsto y(x)$, implying $\delta y_\eta=\mathcal O(\|v_\phi-w_\phi\|_{\mathbb X})$, the second line also vanishes in the desired order.

Finally, we discuss the third line of \eqref{eq:denominatorRepr}, where the remainder terms $r_{R^{-1}}$, $r_V$, and $r_F$ vanish in both cases in the desired order, since the mappings $R_u^{-1}$ and $\VT{y}{\leftarrow}$ are continuously differentiable and $F$ is Newton- respectively strictly differentiable.
In addition $(F_{\eta,\phi})'(u_\phi)$ is locally bounded by assumption in the Newton-differentiable case and fixed in the strict differentiable case, $\VT{{y(u),\eta}}{\leftarrow}(y(v)_{\eta})$ is locally bounded  and $F_{\eta,\phi}(w_\phi)$ is locally bounded since $w$ is fixed in the Newton-differentiable case and by continuity in the strictly differentiable case.  
Thus, the third line of \eqref{eq:denominatorRepr} also vanishes in the desired order if we choose $u,v,w$ corresponding to the respective differentiability concepts.

\vspace{0.1cm} 
In summary, we get the following smallness result. For each $\varepsilon > 0$ there exists a neighbourhood $U_\varepsilon$ of $x_0$, such that
\[
 \|r_{\eta,\phi}(u_\phi,v_\phi,w_\phi)\|_{\mathbb E} \le \varepsilon \|v_\phi-w_\phi\|_{\mathbb X}  
\]
for all choices of $u,v,w\in U_\varepsilon$ corresponding to the respective differentiability concept. 
By the equivalences of norms, established in Prop. \ref{prop:normMetrik} and Prop. \ref{prop:RetraktionMetric} in addition, we obtain the desired results. 
\end{proof}
\begin{bemerkung}
    For the special case of vector fields on Riemannian manifolds $\X$ Newton-differentiability was defined in \cite[Def. 18]{de2020newton} by using the exponential map $\exp : T\X \to \X$ and parallel transports along geodesics. In our notation their definition for semismoothness of a mapping $F\in \Gamma(T\X)$ at $x_0 \in \X$ w.r.t. a Newton derivative $Q_{F(x)} \circ F'(x) \in L(T_x\X, T_x\X)$ reads
    \[
        \|F(x_0) - P_{x_0\leftarrow x}[F(x)+Q_{F(x)}\circ F'(x)\exp^{-1}_x x_0]\|_x \leq \epsilon d_\X(x,x_0)
    \]
    where $P_{x_0\leftarrow x} : T_{x}\X \to T_{x_0}\X$ denotes the parallel transport. By applying the parallel transport {$P_{x\leftarrow x_0}: T_{x_0}\X \to T_{x}\X$} this can equivalently stated as 
    \[
        \|Q_{F(x)}\circ F'(x)(- \exp_x^{-1}x_0) - (F(x) - P_{x \leftarrow x_0}F(x_0))\|_x \leq \epsilon d_\X(x,x_0)
    \]
    which coincides with Newton-differentiability in geometric form (cf. Prop. \ref{QVNewtondiffbar} (i) with $\E=T\X$, $y = id$) by choosing $\VT{y(x)}{\leftarrow}(y(x_0)) = P_{x\leftarrow x_0}$ and $R_x = \exp_x$, and using $R_x^{-1}(x) = 0_x$.
\end{bemerkung}
\subsection{Local superlinear convergence}
In this section we want to prove the local superlinear convergence of Newton's method. In the spirit of Deuflhard \cite{Deuflhard}, our proof will rely on quantities, which not only allow a qualitative \emph{a-priori} convergence result, but also for which good \emph{a-posteriori} algorithmic estimates are accessible. This second aspect will be the basis of an affine covariant damping strategy, elaborated in the next section. In contrast to \cite{Deuflhard}, which relies on affine covariant Lipschitz constants, we take a more qualitative approach and consider affine covariant versions of Newton- and strict differentiabililty, as established in Proposition~\ref{QVNewtondiffbar}.  

Consider again $F:\X\to \E$ where the $C^1$-Banach manifold $\X$ is equipped with a continuous local norm $\|\cdot\|_x$ and an induced metric $d_\X$. 
Let $\VT{y}{\leftarrow} \in \Gamma(\L(\E,E_y))$ be a vector back-transport, $Q \in \Gamma(\L(T\E,p^*\E))$ be a connection map, and $R:T\X\to\X$ be a $C^1$-retraction. For a fixed $x\in\X$ we can locally perform \emph{pullbacks}
\begin{align*}
    R_x^{-1} : \X &\to T_x\X \\
                \hat{x} &\mapsto \hat{\textbf{x}} := R_x^{-1}(\hat{x}).
\end{align*}
Since $R$ is a retraction, we obtain $\textbf{x} = R_x^{-1}(x) = 0_x$. For the Newton method, defined in  Section~\ref{sec:DefNewton},
we consider the following \textit{affine-covariant quantity} at $x\neq z \in \X$
    \begin{equation*}
        \theta_{z}({x}) := \frac{\big\|(Q_{F(x)} \circ F^\prime(x))^{-1} \big[(Q_{F(x)} \circ F^\prime(x)) (\textbf{x}-\textbf{z}) - (F(x) - \VT{{y(x)}}{\leftarrow}(y(z))F(z)) \big]\big\|_{x}}{\|\textbf{x}-\textbf{z} \|_{x}}.
    \end{equation*}
The use of this quantity gives us a very simple result on local superlinear convergence of Newton's method, which serves two purposes. First, it will be the basis for an a-priori result for Newton-differentiable mappings, second, it yields an algorithmic idea to monitor local convergence.
\begin{proposition}\label{pro:localconvergenceAI}
    Let $\X$ be a manifold and $p:\E\to\M$ a vector bundle. Consider a mapping $F:\X\to\E$ and a section $F'\in \Gamma(\mathcal L(T\X,F^*T\E))$. Let $Q \in \Gamma(\L(T\E,p^*\E))$ be a connection map and $R:T\X\to\X$ be a retraction. Let $x_\star\in\X$ be a zero of $F$ and assume that all Newton steps $x_{k}$ are well defined. Assume that
    \begin{equation}
        \label{thetaLim}
        \lim_{x\to x_\star} \theta_{x_\star}(x) = 0.
    \end{equation}
Then Newton's method converges locally to $x_\star$ with a superlinear rate, i.e.
\[
    d_\X(x_\star,x_{k+1}) \leq \theta_{x_\star}(x_k)d_\X(x_\star,x_k).
\]
\end{proposition}
\begin{proof}
Let $x\in \X$. Set $x_+ := R_x(\delta x)$, where $\delta x \in T_x\X$ is the Newton direction at $x$.
Using the pullback $\textbf{x}_{+} = R_{x}^{-1}(x_{+}) = \delta x$ and $F(x_\star)=0$ we get the following equation in the tangent space $T_{x}\X$:
\begin{equation*}
    \| \textbf{x}_{+} - \textbf{x}_\star \|_{x} = \| (Q_{F(x)} \circ F^\prime(x))^{-1} \big[ (Q_{F(x)}\circ F^\prime(x)) (\textbf{x} - \textbf{x}_\star) - (F(x) - \VT{{y(x)}}{\leftarrow}(y(x_\star))F(x_\star))\big]\|_{x}.
\end{equation*}
By definition of $\theta_{x_\star}(x)$ it follows
\begin{equation}
    \label{normenNewton}
    \| \textbf{x}_{+} - \textbf{x}_\star \|_{x} = \theta_{{x}_\star}({x}) \cdot \|\textbf{x} - \textbf{x}_\star \|_{x}.
\end{equation}
Consider a neighborhood $V$ of $x_\star$, where Prop. \ref{prop:RetraktionMetric} holds for some $\varepsilon > 0$. Since $\displaystyle \lim_{x\to x_\star}\theta_{x_\star}(x)=0$, we find a metric ball $r\B_{x_\star}=\{x\in \X : d_\X(x,x_\star)<r\} \subset V$ of radius $r>0$, such that
\[
    \theta_{x_\star}(x) \leq \frac12\frac{1-\varepsilon}{1+\varepsilon}  \ \forall x \in r\B_{x_\star}.
\]
Now choose a starting point $x_0 \in r\B_{x_\star}$ and consider the sequence of Newton steps given by $x_{k+1} := x_{k,+}$ for $k \ge 0$. Assuming for induction that $x_k \in 2^{-k}r\B_{x_\star}$, we obtain by using \eqref{normenNewton}:
\begin{align*}
    d_\X(x_{k+1},x_\star) &\leq \theta_{x_\star}(x_k)\frac{1+\varepsilon}{1-\varepsilon}d_\X(x_k,x_\star) < r 2^{-(k+1)}.
\end{align*}
Thus, all Newton steps remain in $r\B_{x_\star}$ and $x_k \to x_\star$. This implies $\theta_{x_\star}(x_k)\to 0$ for $k\to \infty$ and thus superlinear convergence.
\end{proof}
We now also want to give conditions for local superlinear convergence which can be used for a-priori analysis. 
With the help of a local norm on $\X$ and a fibrewise norm on $\E$ we can use the standard norm of a linear operator $A: E_y \to T_x\X$:
\[
    \Vert A \Vert_{E_{y}\rightarrow T_x\X} := \underset{\|e\|_{E_y}\le 1}{\sup} \Vert Ae\Vert_{T_x\X}.
\]
\begin{proposition}\label{pro:localconvergence}
    Let $F:\X\to\E$ be Newton-differentiable at $x_\star\in\X$ with respect to a Newton derivative $F'\in \Gamma(\mathcal L(T\X,F^*T\E))$, where $F(x_\star)=0$. Assume that for $x\in\X$ the operator norm of $(Q_{F(x)}\circ F^\prime(x))^{-1}$ is uniformly bounded, i.e. there exists $\beta < \infty$ such that
\[ \Vert (Q_{F(x)}\circ F^\prime(x))^{-1} \Vert_{E_{y(x)}\rightarrow T_x\X}  \leq \beta. \]
Then Newton's method converges locally to $x_\star$ at a superlinear rate.
\end{proposition}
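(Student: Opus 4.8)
The plan is to deduce this a-priori statement from the $\theta$-based superlinear convergence result established just above: it suffices to verify that Newton-differentiability of $F$ at $x_\star$ together with the uniform bound on $(Q_{F(x)}\circ F'(x))^{-1}$ forces \eqref{thetaLim}, i.e.\ $\lim_{x\to x_\star}\theta_{x_\star}(x)=0$. Observe first that the hypothesis $\|(Q_{F(x)}\circ F'(x))^{-1}\|_{E_y\to T_x\X}\le\beta$ already entails invertibility of $Q_{F(x)}\circ F'(x)$ for $x$ near $x_\star$, so that the Newton direction $\delta x$ and the step $x_+=R_x(\delta x)$ are well defined there; consequently the induction in the proof of the $\theta$-proposition is applicable once \eqref{thetaLim} is known. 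Also note that $y=p\circ F$ is locally Lipschitz continuous, since $F$ is (this being part of Newton-differentiability) and $p$ is smooth, so Proposition~\ref{QVNewtondiffbar} is at our disposal.

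Since $F(x_\star)=0$, the back-transported term $V^{-1}_{y(x)}(y(x_\star))F(x_\star)$ in the definition of $\theta_{x_\star}$ vanishes, leaving
\[
\theta_{x_\star}(x)=\frac{\big\|(Q_{F(x)}\circ F'(x))^{-1}\big[(Q_{F(x)}\circ F'(x))(\textbf{x}-\textbf{x}_\star)-F(x)\big]\big\|_x}{\|\textbf{x}-\textbf{x}_\star\|_x}.
\]
Estimating the numerator by the operator norm $\|\cdot\|_{E_y\to T_x\X}$ gives
\[
\theta_{x_\star}(x)\le\beta\,\frac{\big\|(Q_{F(x)}\circ F'(x))(\textbf{x}-\textbf{x}_\star)-F(x)\big\|_{E_{y(x)}}}{\|\textbf{x}-\textbf{x}_\star\|_x}.
\]
Next I would apply Proposition~\ref{QVNewtondiffbar}(i) with $\xi=x_\star$: because $F(x_\star)=0$ the alternative hypothesis there is satisfied, so no consistency between $V^{-1}_y$ and $Q$ is needed, and since $\textbf{x}-\textbf{x}_\star=R_x^{-1}(x)-R_x^{-1}(x_\star)$ and $V^{-1}_{y(x)}(y(x_\star))F(x_\star)=0$, the proposition says the numerator above is $o(d_\X(x,x_\star))$ as $x\to x_\star$. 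Finally Proposition~\ref{prop:RetraktionMetric}, applied with $z=x$, second argument $x_\star$, and any fixed $\varepsilon\in(0,1)$, yields $\|\textbf{x}-\textbf{x}_\star\|_x\ge(1+\varepsilon)^{-1}d_\X(x,x_\star)$ on a neighbourhood of $x_\star$; hence $\theta_{x_\star}(x)\le\beta(1+\varepsilon)\cdot o(d_\X(x,x_\star))/d_\X(x,x_\star)\to0$, which is \eqref{thetaLim}. The $\theta$-based proposition then gives local superlinear convergence to $x_\star$.

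The argument is mostly bookkeeping, the substantive remainder estimate having been done already in Proposition~\ref{QVNewtondiffbar}; the one place that requires genuine attention — and where I would take most care — is the mismatch between the denominator $\|\textbf{x}-\textbf{x}_\star\|_x=\|R_x^{-1}(x)-R_x^{-1}(x_\star)\|_x$ of $\theta_{x_\star}$ and the quantity $d_\X(x,x_\star)$ appearing in Proposition~\ref{QVNewtondiffbar}. Bridging this is exactly the role of Proposition~\ref{prop:RetraktionMetric} (with Proposition~\ref{inverseRetr} ensuring that the pullbacks $R_x^{-1}$ are defined near $x_\star$), and one only needs to keep the shrinking neighbourhoods supplied by these propositions compatible; since all of them are local around $x_\star$ this is routine, and the remaining ``iterates stay local'' bookkeeping is inherited from the proof of the $\theta$-proposition.
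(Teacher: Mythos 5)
Your proposal is correct and follows essentially the same route as the paper: bound $\theta_{x_\star}(x)$ by $\beta(1+\varepsilon)$ times the remainder quotient via Proposition~\ref{prop:RetraktionMetric}, invoke Proposition~\ref{QVNewtondiffbar}(i) (using $F(x_\star)=0$ so no consistency of $V^{-1}_y$ with $Q$ is needed) to get $\theta_{x_\star}(x)\to 0$, and then conclude by the preceding $\theta$-based superlinear convergence result. Your extra remarks on well-definedness of the Newton steps and local Lipschitz continuity of $y=p\circ F$ are sound bookkeeping that the paper leaves implicit.
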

\begin{proof}
    Let $\varepsilon \in (0,1)$. Using that the operator norm of $(Q_{F(x)}\circ F^\prime(x))^{-1}$ is bounded and applying Prop. \ref{prop:RetraktionMetric}, we can estimate $\theta_{{x}_\star}({x})$ in a neighborhood $U$ of $x_\star$ as follows:
\begin{align*}
     \theta_{x_\star}(x) &\leq \beta \cdot (1+\varepsilon) \cdot \frac{\|(Q_{F(x)} \circ F^\prime(x)) (R_x^{-1}(x) - R_x^{-1}(x_\star)) - (F(x) - \VT{y}{\leftarrow}(y_\star) F(x_\star))\|_{E_{y(x)}}}{d_\X(x,x_\star)}.
\end{align*}
Since $F(x_\star) = 0$ and $F$ is Newton-differentiable at $x_\star$, we can apply Prop. \ref{QVNewtondiffbar} (i) and obtain
$    \underset{x\to x_\star}{\lim} \theta_{x_\star}(x) = 0$.

\end{proof}
Observe that the quantities, used in Proposition~\ref{pro:localconvergence} depend on the choice of a norm on $E_y$, while the quantities in Proposition~\ref{pro:localconvergenceAI} do not. This suggests that these norms are not strictly necessary to characterize local convergence, one of the core ideas of affine covariant analysis of Newton methods. 
\subsection{Monitoring local convergence}\label{sec:convergenceMonitor}
Let us now define a computable quantity  to monitor local convergence. For this, we set $x= x_k$, $y := y(x) = p(F(x)), \; y_\star := y(x_\star) = p(F(x_\star))$ and consider the affine-covariant quantity $\theta_{x_\star}(x)$ that we have seen in the proof of the local convergence:
\[
    \theta_{{x}_\star}({x}) = \frac{\|(Q_{F(x)} \circ F^\prime(x))^{-1} \left[(Q_{F(x)} \circ F^\prime(x)) (\textbf{x} - \textbf{x}_\star) - (F(x) - \VT{y}{\leftarrow}(y_\star)F(x_\star)) \right]\|_{x}}{\|\textbf{x} - \textbf{x}_\star \|_{x}}.
\]
Since the target point $x_\star$ is computationally not available, we replace $x_\star$ by the next iterate $x_+ := x_{k+1}$, correspondingly the pullback $\textbf{x}_\star$ by $\textbf{x}_{+} = \delta x \in T_{x}{\X}$, and $y_\star$ by $y_{+} = p(F(x_{+}))$. By using $\textbf{x} = 0_{x}$ and the definition of the Newton direction $\delta x=\textbf{x}_+ - \textbf{x} = (Q_{F(x)} \circ F^\prime(x))^{-1}(-F(x))$, we obtain:
\begin{align*}
\theta_{{x}_{+}}({x}) 
& =  \frac{\|(Q_{F(x)} \circ F^\prime(x))^{-1} \left[(Q_{F(x)} \circ F^\prime(x)) (\textbf{x} - \textbf{x}_{+}) - (F(x) - \VT{y}{\leftarrow}(y_{+})F(x_{+})) \right]\|_{x}}{\|\textbf{x} - \textbf{x}_{+} \|_{x}} \\
& \overset{}{=}  \frac{\|(Q_{F(x)} \circ F^\prime (x))^{-1}\VT{y}{\leftarrow}(y_{+})F(x_{+})\|_{x}}{\|\textbf{x} - \textbf{x}_{+} \|_{x}}
\end{align*}
Hence, we can calculate $\theta_{{x}_{+}}({x})$ at the computational cost of the next \textit{simplified Newton direction} $\overline{\delta x_+}$ for our original problem with starting point $x_0 = x_k = x$. Namely, this is the solution $\overline{\delta x_+} \in T_{x}\X$ of the equation
\begin{equation}
\label{vereinfNewton}
    Q_{F(x)} \circ F^\prime (x)\overline{\delta x_+} + \VT{y}{\leftarrow}(y_+)F(x_+) = 0_{{y}}.
\end{equation}
A vector back-transport is needed here since $Q_{F(x)} \circ F^\prime (x)\overline{\delta x_+}$ and $F(x_+)$ do not lie in the same fibre.
Thus, we can rewrite $\theta_{x_+}(x)$ as 
\begin{equation}
    \label{theta}
 \theta_{{x}_+}({x}) = \frac{\|\overline{\delta x_+}\|_{x}}{\| \delta x\|_{x}}.
\end{equation}
The following lemma shows that we can use this to detect local convergence.
\begin{lemma}
    Let $F$ be Newton-differentiable at $x_\star$ with respect to a Newton derivative $F'$, where $F(x_\star)=0$. Assume that the operator norm of $(Q_{F(x)}\circ F^\prime(x))^{-1}$ is uniformly bounded by $\beta < \infty$. Then it holds:
    \[
        \lim_{x\to x_\star}\theta_{x_+}(x) = 0.
    \]
\end{lemma}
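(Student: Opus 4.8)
The plan is to estimate the numerator and the denominator in the representation $\theta_{x_+}(x)=\|\overline{\delta x_+}\|_x/\|\delta x\|_x$ from \eqref{theta} separately, exploiting that the new iterate $x_+$ is a superlinearly better approximation of $x_\star$, so that $\|F(x_+)\|$ — and hence $\overline{\delta x_+}$ — is of strictly higher order than $d_\X(x,x_\star)$.

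First, fix $1>\varepsilon>0$ and a neighborhood of $x_\star$ on which Proposition~\ref{prop:RetraktionMetric} holds. As in the proof of the preceding proposition, $F(x_\star)=0$ together with the operator-norm bound $\beta$ and Proposition~\ref{QVNewtondiffbar}(i) gives $\theta_{x_\star}(x)\to 0$ as $x\to x_\star$. Using the pullback via $R_x$, \eqref{normenNewton} reads $\|\textbf{x}_+-\textbf{x}_\star\|_x=\theta_{x_\star}(x)\|\textbf{x}_\star\|_x$, and translating with Proposition~\ref{prop:RetraktionMetric} yields
\[
  d_\X(x_+,x_\star)\le \tfrac{1+\varepsilon}{1-\varepsilon}\,\theta_{x_\star}(x)\,d_\X(x,x_\star),
\]
so $x_+\to x_\star$ (hence $y_+=p(F(x_+))\to y_\star$) and $d_\X(x_+,x_\star)=o(d_\X(x,x_\star))$; in particular $x_+\to x_\star$ follows from the estimate itself, so no continuity of $x\mapsto x_+$ is required.

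For the numerator, \eqref{vereinfNewton} gives $\overline{\delta x_+}=-(Q_{F(x)}\circ F'(x))^{-1}V^{-1}_y(y_+)F(x_+)$, hence $\|\overline{\delta x_+}\|_x\le\beta\,\|V^{-1}_y(y_+)\|_{E_{y_+}\to E_y}\,\|F(x_+)\|_{y_+}$; since $V^{-1}_y(y)=\mathrm{Id}$ and the back-transport is continuous, the first factor stays bounded for $y,y_+$ near $y_\star$. To bound $\|F(x_+)\|_{y_+}$, apply Proposition~\ref{QVNewtondiffbar}(i) at $x_+$ with $\xi=x_\star$, $F(x_\star)=0$ and $R_{x_+}^{-1}(x_+)=0$, which gives $\|F(x_+)+(Q_{F(x_+)}\circ F'(x_+))R_{x_+}^{-1}(x_\star)\|_{y_+}=o(d_\X(x_+,x_\star))$; since $Q_{F(\cdot)}\circ F'(\cdot)$ is locally bounded (as $F'$ is a locally bounded Newton-derivative, $F$ and $y$ are continuous and $B$ depends boundedly on its arguments) and $\|R_{x_+}^{-1}(x_\star)\|_{x_+}\le\tfrac{1}{1-\varepsilon}d_\X(x_+,x_\star)$ by Proposition~\ref{prop:RetraktionMetric}, we obtain $\|F(x_+)\|_{y_+}=\mathcal O(d_\X(x_+,x_\star))$. (Alternatively this bound follows directly from Newton-differentiability of $F$ at $x_\star$ read in charts, using Proposition~\ref{prop:normMetrik} and Lemma~\ref{normEquivalence}.) Combined with the first step, $\|\overline{\delta x_+}\|_x=\mathcal O(d_\X(x_+,x_\star))=o(d_\X(x,x_\star))$.

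For the denominator, \eqref{normenNewton} and the reverse triangle inequality give $\|\delta x\|_x=\|\textbf{x}_+\|_x\ge(1-\theta_{x_\star}(x))\|\textbf{x}_\star\|_x$, which for $x$ close enough that $\theta_{x_\star}(x)\le\tfrac12$ is $\ge\tfrac12\|\textbf{x}_\star\|_x\ge\tfrac{1}{2(1+\varepsilon)}d_\X(x,x_\star)$ by Proposition~\ref{prop:RetraktionMetric}. Hence $\theta_{x_+}(x)\le 2(1+\varepsilon)\,\|\overline{\delta x_+}\|_x/d_\X(x,x_\star)\to 0$. The only genuinely delicate point is this bookkeeping: both numerator and denominator are $\mathcal O(d_\X(x,x_\star))$, and it is solely the superlinear factor $\theta_{x_\star}(x)$ concealed in $\|F(x_+)\|=\mathcal O(d_\X(x_+,x_\star))$ that lowers the order of the numerator, so one must apply the geometric Newton-differentiability estimate of Proposition~\ref{QVNewtondiffbar} at the two base points $x$ and $x_+$ and pass consistently between the tangent-space norms and $d_\X$ via Proposition~\ref{prop:RetraktionMetric}.
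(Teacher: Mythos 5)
Your argument is correct, and it reaches the conclusion by a slightly different decomposition than the paper. Both proofs rest on the same three pillars: Proposition~\ref{QVNewtondiffbar}(i) with $F(x_\star)=0$, the superlinear contraction $d_\X(x_+,x_\star)\le \tfrac{1+\varepsilon}{1-\varepsilon}\theta_{x_\star}(x)\,d_\X(x,x_\star)=o(d_\X(x,x_\star))$, and Proposition~\ref{prop:RetraktionMetric} to pass between tangent-space norms and $d_\X$. The paper, however, never isolates $\|F(x_+)\|$: it bounds $\theta_{x_+}(x)\le C\,\frac{\|(Q_{F(x)}\circ F'(x))(R_x^{-1}(x)-R_x^{-1}(x_+))-(F(x)-V_y^{-1}(y_+)F(x_+))\|_{E_y}}{d_\X(x,x_+)}$, splits the numerator by the triangle inequality around $x_\star$ into the Newton-differentiability term at the base point $x$ (which is $o(d_\X(x,x_\star))$ by Proposition~\ref{QVNewtondiffbar}(i)) and a mixed term estimated by $\widetilde C\, d_\X(x_\star,x_+)$ via local boundedness/Lipschitz arguments, and lower-bounds the denominator by $d_\X(x,x_+)\ge|d_\X(x,x_\star)-d_\X(x_\star,x_+)|$; everything stays at the base point $x$ and in the fixed fibre $E_{y(x)}$. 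You instead start from the closed form $\overline{\delta x_+}=-(Q_{F(x)}\circ F'(x))^{-1}V_y^{-1}(y_+)F(x_+)$ of \eqref{theta}, so the whole numerator reduces to showing $\|F(x_+)\|_{y_+}=\mathcal O(d_\X(x_+,x_\star))$ — which you get either from Proposition~\ref{QVNewtondiffbar}(i) applied at the moving base point $x_+$ with $\xi=x_\star$, or, even more cheaply, from the local Lipschitz continuity of $F$ (built into Newton-differentiability) together with $F(x_\star)=0$ — and you lower-bound the denominator at the tangent-norm level via $\|\delta x\|_x\ge(1-\theta_{x_\star}(x))\|\mathbf{x}_\star\|_x$. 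Your route is somewhat more direct and avoids the paper's "second summand" estimate, at the price of needing the uniform validity of Proposition~\ref{prop:RetraktionMetric} and the norm equivalences at the varying point $x_+$ (which those results do provide, since their neighborhoods are uniform around $x_\star$), and of the local boundedness of $Q_{F(\cdot)}\circ F'(\cdot)$ and of the back-transport near $x_\star$ — the same implicit assumptions the paper itself invokes when estimating its second summand "analogously to the proof of Prop.~\ref{QVNewtondiffbar}".
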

\begin{proof}
    Using that the operator norm of $(Q_{F(x)}\circ F^\prime(x))^{-1}$ is bounded and applying Prop. \ref{prop:RetraktionMetric}, we can estimate $\theta_{{x}_+}({x})$ as follows:
    \begin{align*}
        \theta_{x_+}(x) 
        &\leq C \cdot \frac{\|(Q_{F(x)} \circ F^\prime(x)) (R_x^{-1}(x) - R_x^{-1}(x_+)) - (F(x) - \VT{y}{\leftarrow}(y_+) F(x_+))\|_{E_{y}}}{d_\X(x,x_+)}.
   \end{align*}
   Consider the numerator of $\theta_{x_+}(x)$.
   \begin{align*}
    &\|(Q_{F(x)} \circ F^\prime(x)) (R_x^{-1}(x) - R_x^{-1}(x_+)) - (F(x) - \VT{y}{\leftarrow}(y_+) F(x_+))\|_{E_{y}}\\
    &\leq \|(Q_{F(x)} \circ F^\prime(x)) (R_x^{-1}(x) - R_x^{-1}(x_\star)) - (F(x) - \VT{y}{\leftarrow}(y_\star) F(x_\star))\|_{E_{y}}\\
    &\quad + \|(Q_{F(x)} \circ F^\prime(x)) (R_x^{-1}(x_\star) - R_x^{-1}(x_+)) - (\VT{y}{\leftarrow}(y_\star)F(x_\star) - \VT{y}{\leftarrow}(y_+) F(x_+))\|_{E_{y}}.
   \end{align*}
   Since $F$ is Newton-differentiable at $x_\star$ and $F(x_\star)= 0$, we can apply by Prop. \ref{QVNewtondiffbar} (i) and obtain
   \begin{equation}
    \label{eq:ersterSummand}
    \|(Q_{F(x)} \circ F^\prime(x)) (R_x^{-1}(x) - R_x^{-1}(x_\star)) - (F(x) - \VT{y}{\leftarrow}(y_\star) F(x_\star))\|_{E_{y}} = o(d_\X(x,x_\star)).
   \end{equation}
    Since $\VT{y}{\leftarrow}$ and $Q_{F(x)} \circ F^\prime(x)$ are locally bounded and $F$ is locally Lipschitz continuous, we can estimate the second summand by using the triangle inequality, $F(x_\star) = 0$ and Prop. \ref{prop:RetraktionMetric} as 
   \begin{equation}
    \label{eq:zweiterSummand}
    \|(Q_{F(x)} \circ F^\prime(x)) (R_x^{-1}(x_\star) - R_x^{-1}(x_+)) - (\VT{y}{\leftarrow}(y_\star)F(x_\star) - \VT{y}{\leftarrow}(y_+) F(x_+))\|_{E_{y}} \leq \widetilde{C} d_\X(x_\star, x_+).
   \end{equation}
   Since Newton's method converges superlinearly, we obtain
        $d_\X(x_\star, x_+) = o(d_\X(x,x_\star))$
   as $x\to x_\star$. Thus, combining \eqref{eq:ersterSummand} and \eqref{eq:zweiterSummand}, the numerator is of order $o(d_\X(x,x_\star))$ as $x\to x_\star$.
   Estimating the denominator of $\theta_{x_+}(x)$ by 
$d_\X(x,x_+) \geq \vert d_\X(x,x_\star) - d_\X(x_\star,x_+)\vert$,
 we obtain the desired result.
\end{proof}

\section{An Affine Covariant Damping Strategy} 
To obtain convergence from remote initial guesses, Newton's method is typically equipped with a globalization strategy.
Especially in Newton's method applied in optimization some strategies including BFGS, Levenberg-Marquardt and trust region methods were introduced, see e.g. \cite{bertsekas2014constrained,dennis1996numerical,smith2014optimization}. In a linear setting it is well-known that \textit{damped Newton methods} are one way to globalize the convergence of Newton's method (cf. \cite{bertsekas2014constrained, dennis1996numerical,izmailov2014newton,ortega2000iterative}). Here, the step size of the Newton direction is scaled by a factor $\alpha \in (0,1]$. In Newton's method applied in optimization on Riemannian manifolds some strategies including damping and trust region methods were introduced, see e.g. \cite{EfficientDampedNewton, absil2007trust}.

We will also use a damping strategy to globalize our method. The choice of the damping factor is more geometrically motivated here and follows an affine covariant strategy in the spirit of Deuflhard \cite{Deuflhard}. Our way of globalization works for a manifold $\X$, equipped with the Banach-type metrics from Section~\ref{sec:metric}. In our algorithm we only have to evaluate local norms $\|\cdot\|_x$ on tangent spaces, the evalution of $d_\X$ and also of norms $\|\cdot\|_{E_y}$ on $\E$ is not needed. 

To prepare the choice of our damping factor, we first define a so called \textit{Newton path}. Our strategy will then be to follow this path by suitably scaling down the Newton directions until the local convergence area of Newton's method is reached. It will turn out that strict differentiability is an appropriate condition to render this strategy well defined. 
\subsection{Newton paths on Manifolds}
Let us define an algebraic Newton path for $F: \X\to \E$ from a manifold $\X$ to a vector bundle $p:\E\to \Y$, a generalization of the Newton path in linear spaces \cite[Sec. 3.1.4]{Deuflhard}. Denote by $x(0)$ the starting point of the path. For $\alpha \in [0,1]$ and $x(\alpha)\in\X$ we denote by $y(\alpha):=p(F(x(\alpha)))\in \Y$ the base point of $F(x(\alpha))$. Consider a vector back-transport $\VT{y}{\leftarrow} \in \Gamma(\L(\E,E_y))$.
We consider the \textit{Newton path problem}, which is based on the idea of scaling down the residual by a factor of $1-\alpha$:
\begin{equation}
    \label{NewtonPath}
    \NE{\VT{{y(0)}}{\leftarrow}}(F(x(\alpha))) = (1-\alpha) F(x(0)), \; \alpha \in [0,1].
\end{equation}
In contrast to the case of linear spaces, we need a vector back-transport on the left-hand side since $F(x(\alpha))$ and $F(x(0))$ do not lie in the same fibre. The vector back-transport allows us to formulate the Newton path problem in the fixed fibre $E_{y(0)}$, i.e. on a linear space.
We call the mapping $\alpha \to x(\alpha)$ (where it is defined) the \textit{algebraic Newton path} starting at $x(0)$.

\begin{bemerkung}
    For a mapping $G:\mathbb{X}\rightarrow \mathbb{Y}$ between linear spaces $\mathbb{X}$ and $\mathbb{Y}$ the Newton path can be defined alternatively as the solution of the ordinary differential equation \cite[Eq. (3.24)]{Deuflhard}:
\begin{equation*}
G^\prime (x_d(\alpha))x_{d}'(\alpha) + G(x_d(0))=0.
\end{equation*}
We may also generalize this idea and define the \emph{differential Newton path} as the trajectory of the differential equation
\begin{equation}\label{eq:dNewtonPath}
    Q_{F(x_d(\alpha))} \circ F^\prime (x_d(\alpha))x_d^\prime (\alpha) + F(x_d(\alpha)) = 0.
\end{equation}
In contrast to the case of linear spaces the algebraic and the differential Newton path do not coincide in our more general setting.
\end{bemerkung}
\begin{proposition}
    \label{ExistenzNewtonpfad}
    Let $F:\X\to\E$ be strictly differentiable at $x(0)$ and $\NE{\VT{{y(0)}}{\leftarrow}}^\prime(F(x(0)))F^\prime (x(0))\in L(T_{x(0)}\X, E_{y(0)})$ continuously invertible.
    Then for sufficiently small $\alpha$ there exists a solution $x(\alpha)$ of the Newton path problem (\ref{NewtonPath}).
\end{proposition}
\begin{proof}
     Application of the implicit function theorem \cite{zeidler1986nonlinear}, which also holds for strictly differentiable mappings (cf. e.g. \cite[Chap. 25]{schechter1996handbook}), on
     \[
        g(\alpha, x) := \NE{\VT{{y(0)}}{\leftarrow}}(F(x)) - (1-\alpha) F(x(0))
     \] 
    yields the existence of $x(\alpha)$ for sufficiently small $\alpha$.
\end{proof}
By using the chain rule, we obtain the following expression for the computation of  simplified Newton directions $\overline{\delta x^\alpha}\in T_x\X$ at a current iterate $x\in \X$:
\[
    \NE{\VT{{y(0)}}{\leftarrow}}^\prime(F(x(0)))F^\prime (x(0)) \overline{\delta x^\alpha} + \NE{\VT{{y(0)}}{\leftarrow}}(F(x)) - (1-\alpha) F(x(0)) = 0.
\]
Using $\VT{{y(0)}}{\leftarrow}(y(0)) = Id_{E_{y(0)}}$, we calculate the first (simplified) Newton direction $\delta x^\alpha$ at $x=x(0)$ for the Newton path problem as a solution of the following equation:
    \begin{equation}
        \label{eq:firstDirectionPathProblem}
        \NE{\VT{{y(0)}}{\leftarrow}}^\prime(F(x(0)))F^\prime (x(0)) \delta x^\alpha + \underbrace{F(x(0)) - (1-\alpha) F(x(0))}_{=\alpha F(x(0))} = 0.
    \end{equation}
\begin{lemma}
    \label{lem:dampedNewtonStep}
    Let $x\in \X$, $Q \in \Gamma(\L(T\E, p^*\E))$ be a linear connection map, and $\alpha\in (0,1]$. Let $\delta x$ be the current Newton direction, given by solving \eqref{eq:NewtonEquation}.

    Consider the Newton path problem \eqref{NewtonPath} with vector transport $\VT{y}{\leftarrow}\in\Gamma(\L(\E, E_y))$ and assume that $\VT{y}{\leftarrow}$ is consistent with $Q$ at $e=F(x)$. Then the damped Newton direction $\alpha\delta x$ coincides with the first Newton direction $\delta x^\alpha$ of the Newton path problem \eqref{NewtonPath}. Moreover, it is tangent to the algebraic and to the differential Newton path starting at $x(0) = x$. In particular, we obtain
    \begin{equation}
        \label{SchrittNewtonpfad}
        \alpha \delta x=\delta x^\alpha =\alpha x'(0)=\alpha x_d'(0).
    \end{equation}
\end{lemma}
\begin{proof}
    Consider the Newton path problem \eqref{NewtonPath}.
    Implicit differentiation at $x(0)$ with respect to $\alpha$ yields the tangent $x'(0)$ by
    \begin{equation*}
         \NE{\VT{{y(0)}}{\leftarrow}}^\prime(F(x(0)))F^\prime(x(0))x^\prime(0) + F(x(0)) = 0.
    \end{equation*}
    The first (simplified) Newton direction $\delta x^\alpha$ for the Newton path problem with fixed $\alpha$ at the current starting point $x(0) = x$ can be computed by \eqref{eq:firstDirectionPathProblem}:
    \begin{equation*}
        \NE{\VT{{y(0)}}{\leftarrow}}^\prime(F(x(0)))F^\prime (x(0)) \delta x^\alpha + \alpha F(x(0)) = 0.
    \end{equation*}
    Thus, the Newton direction $\delta x^\alpha$ for the Newton path problem is tangent to the algebraic Newton path.
    
    Using the linearity of the connection map $Q$, we can compute the damped Newton direction $\alpha \delta x$ for our original problem at $x(0)$ by
    \[
        Q_{F(x(0))}\circ F'(x(0))(\alpha \delta x) + \alpha F(x(0)) = 0.
    \]
    Comparison with \eqref{eq:dNewtonPath} yields $\alpha\delta x=x_d'(0)$. 
    Using the consistency of $\VT{y}{\leftarrow}$ and $Q$ at $e=F(x(0))$, i.e. $Q_{F(x(0))}=\NE{\VT{{y(0)}}{\leftarrow}}^\prime(F(x(0)))$, this finally yields
    \[
        \delta x^\alpha = \alpha \delta x.
    \]
    In particular, the Newton direction $\delta x$ is tangent to the Newton path at $x(0)$.
\end{proof}
\subsection{Computation of Damping Factors}

Our damping strategy is based on following Newton paths. 
At the current iterate $x$ we consider the algebraic Newton path starting at $x(0)=x$.
Lemma~\ref{lem:dampedNewtonStep} yields the crucial observation that the damped Newton direction is tangential to the Newton path, as along as we choose the vector back-transport and the connection map consistently. 
Thus for small damping factors, the deviation of the damped Newton step from the Newton path is very small. In our case 
the damped Newton step is computed from the Newton direction $\delta x$ and a damping factor $\alpha \in (0,1]$ via a retraction:
\[
    x_+ = R_x(\alpha\delta x).
\]
The idea is now to choose $\alpha$ sufficiently small such that the simplified Newton method for solving \eqref{NewtonPath} with starting point $x(0) = x$ is likely to converge to the target point $x(\alpha)$ on the Newton path, i.e., solves the Newton path problem. By Lemma~\ref{lem:dampedNewtonStep} the damped Newton step coincides with the first Newton step of the Newton path problem. 
Thus we may use the convergence monitor $\theta_{x_+}({x})$ from Section~\ref{sec:convergenceMonitor} at the current iterate $x$ to detect local convergence towards the solution of the Newton path problem. 
According to (\ref{theta}) and using (\ref{SchrittNewtonpfad}), our convergence monitor reads 
\begin{equation}
    \label{eq:theta}
    \theta_{x_+}({x}) =\theta_{R_x(\alpha \delta x)}({x})= \frac{\|\overline{\delta x_+^\alpha}\|_{x}}{\| \alpha \delta x\|_{x}}.
\end{equation}
To evaluate this expression we have to compute the next simplified Newton direction $\overline{\delta x_+^\alpha}$ for the Newton path problem (\ref{NewtonPath}) at the new iterate $x_+$ for the initial guess $x$. This direction is given by the solution of the equation
\begin{equation}\label{eq:secondSimp}
    \NE{\VT{{y(x)}}{\leftarrow}}'(F(x)) F^\prime (x) \overline{\delta x^\alpha_+} + \VT{{y(x)}}{\leftarrow}(y(x_+))F(x_+) - (1-\alpha)F(x) = 0.
\end{equation}
These calculations are repeated, in a back-tracking line search, with decreasing damping factor until $\theta_{x_+}({x}) < \Theta_{acc}$ for some acceptable contraction $\Theta_{acc}$ holds. To achieve this, the damping factor $\alpha$ is readjusted iteratively in the following way. If $\theta_{x_+}({x}) \geq \Theta_{acc}$, choose $\alpha_+ \in (0,1]$ such that for user defined parameters $0 < \Theta_{des} < \Theta_{acc}$ the inequality $\alpha_+ \, \theta_{x_+}({x}) \leq \alpha\,\Theta_{des}$  holds, i.e. we compute the next damping factor by
\begin{center}
$\alpha_+ := \min \left(1, \cfrac{\alpha\Theta_{des}}{\theta_{x_+}(x)}\right)$.
\end{center}
Once a new iterate $x_+=R_x(\alpha\delta x)$ is accepted, we do not iterate towards the corresponding point $x(\alpha)$ on the ``old'' Newton path, but use it as a starting point of a new Newton path, starting at $x_+$. 

We summarise these ideas in Algorithm~\ref{alg:DampedNewton}, which is a modification of \cite[Alg. NLEQ-ERR]{Deuflhard}.

\begin{algorithm}[h] \label{alg:DampedNewton}
    \caption{Affine covariant damped Newton's method}
    \label{GlobalerNewton}
    \begin{algorithmic}[1]
    \REQUIRE $x, \; \alpha \text{ (initial guesses)}; \; \alpha_{fail}, \; TOL$, $\Theta_{des} < \Theta_{acc}$ (parameters); $\VT{y}{\leftarrow}$ (vector back-transport), $Q_{e} = \NE{\VT{y}{\leftarrow}}'(e)$ (consistent connection map), $R_x$ (retraction)
    \REPEAT 
        \STATE solve $\delta x \leftarrow (Q_{F(x)} \circ F^\prime (x))\delta x + F(x) = 0$
            \REPEAT
                \STATE compute $x_+ = R_x(\alpha \delta x)$
                \STATE solve $\overline{\delta x^\alpha_+} \leftarrow Q_{F(x)} \circ F^\prime (x) \overline{\delta x^\alpha_+} + \VT{{y(x)}}{\leftarrow}(y(x_+))F(x_+) - (1-\alpha)F(x) = 0$
                \STATE compute $\theta_{x_+}(x) = \cfrac{\|\overline{\delta x_+^\alpha}\|_x}{\|\alpha \delta x\|_x}$
                \STATE update $\alpha \leftarrow \min \left(1, \cfrac{\alpha\Theta_{des}}{\theta_{x_+}(x)}\right)$
                 \IF{$\alpha < \alpha_{fail}$}
                       \STATE \textbf{terminate:} \grqq Newton's method failed\grqq
                \ENDIF
            \UNTIL{$\theta_{x_+}(x) \leq \Theta_{acc}$}
        \STATE  update $x \leftarrow x_+$
        \IF{$\alpha = 1 \text{ and } \theta_{x_+}(x) \leq \frac{1}{4} \text{ and } \|\delta x\|_x \leq TOL$}
            \STATE \textbf{terminate:} \grqq Desired Accuracy reached\grqq, $x_{out} = x_+$
        \ENDIF
    \UNTIL{maximum number of iterations is reached}
    \end{algorithmic}
    \end{algorithm}

\begin{proposition}
    Let $F$ be strictly differentiable at $x$ and $\VT{y}{\leftarrow}$ be consistent with $Q$. Then there exists $\widehat{\alpha} > 0$ such that
    \[
        \forall \alpha \leq \widehat{\alpha} \; : \; \theta_{R_x(\alpha \delta x)}(x) \leq \Theta_{acc},
    \]
    and the inner loop terminates after finitely many iterations.
\end{proposition}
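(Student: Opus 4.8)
The plan is to show that $\theta_{R_x(\lambda\delta x)}(x)\to 0$ as $\lambda\to 0^+$. Since $\Theta_{acc}>0$ is a fixed parameter, this immediately produces a $\widehat\lambda>0$ with $\theta_{R_x(\lambda\delta x)}(x)\le\Theta_{acc}$ for all $\lambda\le\widehat\lambda$. For the second assertion, I would then note that in the inner loop every unsuccessful iteration has $\theta_{x_+}(x)\ge\Theta_{acc}$, hence replaces $\lambda$ by $\lambda_+=\min(1,\lambda\Theta_{des}/\theta_{x_+}(x))\le\lambda\,\Theta_{des}/\Theta_{acc}$ with $\Theta_{des}/\Theta_{acc}<1$; so $\lambda$ shrinks geometrically, and after finitely many steps either the failure branch $\lambda<\lambda_{fail}$ triggers or $\lambda\le\widehat\lambda$ is reached, which forces $\theta_{x_+}(x)\le\Theta_{acc}$ and exits the loop. (We may assume $\delta x\ne 0$; otherwise $F(x)=0$, $x$ is already a zero and there is nothing to prove.)

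\textbf{Reducing the quantity to be estimated.} First I would rewrite $\theta_{x_+}(x)$ via the equation defining the simplified direction $\overline{\delta x_+^\lambda}$ in Algorithm~\ref{GlobalerNewton}. By consistency of $V^{-1}_y$ with $Q$ we have $\NE{V_{y(x)}^{-1}}'(F(x))=Q_{F(x)}$, so $\overline{\delta x_+^\lambda}$ solves $Q_{F(x)}\circ F'(x)\,\overline{\delta x_+^\lambda}=(1-\lambda)F(x)-V^{-1}_{y(x)}(y(x_+))F(x_+)$, and by \eqref{theta} together with $x_+=R_x(\lambda\delta x)$,
\[
\theta_{x_+}(x)=\frac{\|\overline{\delta x_+^\lambda}\|_x}{\lambda\|\delta x\|_x}\le\bigl\|(Q_{F(x)}\circ F'(x))^{-1}\bigr\|_{E_{y(x)}\to T_x\X}\cdot\frac{\bigl\|(1-\lambda)F(x)-V^{-1}_{y(x)}(y(x_+))F(x_+)\bigr\|_{E_{y(x)}}}{\lambda\|\delta x\|_x},
\]
where the operator-norm factor is a fixed finite constant $\beta$, because the Newton direction $\delta x$ being well defined means exactly that $Q_{F(x)}\circ F'(x)$ is invertible. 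Thus it suffices to bound the scaled residual $\|(1-\lambda)F(x)-V^{-1}_{y(x)}(y(x_+))F(x_+)\|_{E_{y(x)}}$ by $o(\lambda)$.

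\textbf{The core estimate.} This is where strict differentiability of $F$ enters, through Proposition~\ref{QVNewtondiffbar}(ii) applied at the base point $a=x$. For $\lambda$ small enough that $R_x$ is a diffeomorphism near $0_x$ (Proposition~\ref{inverseRetr}) one has $R_x^{-1}(x_+)=\lambda\delta x$ and $R_x^{-1}(x)=0_x$, and $V^{-1}_{y(x)}(y(x))=Id$. Feeding the pair $(x_+,x)$ into Proposition~\ref{QVNewtondiffbar}(ii) and using that, by Lemma~\ref{lem:dampedNewtonStep}, the damped Newton equation gives $Q_{F(x)}\circ F'(x)(\lambda\delta x)=-\lambda F(x)$, the left-hand side of that estimate collapses to $\|(1-\lambda)F(x)-V^{-1}_{y(x)}(y(x_+))F(x_+)\|_{E_{y(x)}}$ while the right-hand side is $\varepsilon\|\lambda\delta x\|_x=\varepsilon\lambda\|\delta x\|_x$. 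Hence for every $\varepsilon>0$ there is a neighborhood $W$ of $x$ such that, whenever $x_+=R_x(\lambda\delta x)\in W$,
\[
\bigl\|(1-\lambda)F(x)-V^{-1}_{y(x)}(y(x_+))F(x_+)\bigr\|_{E_{y(x)}}<\varepsilon\,\lambda\|\delta x\|_x,
\]
and combining this with the bound of the previous paragraph yields $\theta_{R_x(\lambda\delta x)}(x)\le\beta\varepsilon$. Choosing $\varepsilon\le\Theta_{acc}/\beta$ fixes $W$, and then choosing $\widehat\lambda>0$ small enough that $R_x(\lambda\delta x)\in W$ for all $\lambda\le\widehat\lambda$ (possible since $R_x$ is continuous with $R_x(0_x)=x$) and that $\lambda\le\widehat\lambda$ stays inside the retraction's diffeomorphism neighborhood gives the claim.

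\textbf{Main obstacle.} There is no deep difficulty once Proposition~\ref{QVNewtondiffbar}(ii) and Lemma~\ref{lem:dampedNewtonStep} are in hand; the only thing needing care is coordinating the two smallness requirements on $\lambda$ — small enough that $R_x^{-1}(x_+)=\lambda\delta x$ (so that the retraction is invertible in the relevant region) and small enough that $x_+$ lands inside the neighborhood $W$ delivered by Proposition~\ref{QVNewtondiffbar}(ii) for the chosen $\varepsilon$ — which is handled routinely by continuity of $R_x$ at $0_x$.
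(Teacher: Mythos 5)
Your proof is correct, but it takes a genuinely different route from the paper. The paper proves the geometric decrease of $\lambda$ exactly as you do, but then chooses $\widehat{\lambda}$ so that the Newton path starting at $x(0)=x$ exists (Proposition~\ref{ExistenzNewtonpfad}, i.e.\ the implicit function theorem for strictly differentiable maps), interprets $\theta_{R_x(\lambda\delta x)}(x)$ as the ratio of the first two simplified Newton steps for the Newton path problem, and concludes from the local convergence of the simplified Newton iteration towards $x(\lambda)$ that this ratio drops below $\Theta_{acc}$ for small $\lambda$. You instead bypass the Newton path entirely: using consistency to write the simplified-Newton equation with $Q_{F(x)}\circ F'(x)$, the bounded inverse $\beta$, and Proposition~\ref{QVNewtondiffbar}(ii) applied at $a=x$ to the pair $(x_+,x)$ together with $Q_{F(x)}\circ F'(x)(\lambda\delta x)=-\lambda F(x)$, you get the explicit bound $\theta_{R_x(\lambda\delta x)}(x)\le\beta\varepsilon$, and then pick $\varepsilon\le\Theta_{acc}/\beta$ and $\widehat{\lambda}$ by continuity of $R_x$. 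What your route buys is a fully quantitative, self-contained estimate that does not invoke the implicit function theorem or the existence of $x(\lambda)$, and it actually fills in a step the paper leaves informal (passing from ``the simplified Newton method converges'' to ``the contraction ratio is $\le\Theta_{acc}$''); what the paper's route buys is the conceptual link between $\theta$ and monitoring convergence of the simplified Newton iteration towards the Newton path, which is the motivation for the damping strategy. Both arguments tacitly require the bounded invertibility of $Q_{F(x)}\circ F'(x)$ (the paper through the hypothesis of Proposition~\ref{ExistenzNewtonpfad}, you through $\beta<\infty$, justified by the open mapping theorem), and both use consistency and strict differentiability in an essential way, so the hypotheses are used identically.
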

\begin{proof}
    Without going into the details, the proof of the implicit function theorem, which yields local existence of the Newton path (Prop. \ref{ExistenzNewtonpfad}), shows that the simplified Newton method converges with a linear rate for sufficiently small choice of $\alpha$. By further reducing $\alpha$, the corresponding rate can be chosen arbitrarily small, in particular faster than $\Theta_{acc}$. Thus, since 
    \[
        \theta_{R_{x}(\alpha \delta x)}({x}) \overset{\eqref{eq:theta}}{=} \frac{\|\overline{\delta x_+^\alpha}\|_{x}}{\|\delta x^\alpha\|_{x}},
    \]
    is a lower bound for this rate, we obtain our first statement. 
    
    A sufficiently small choice of $\alpha$ is taken within finitely many steps: if $\theta_{R_x(\alpha\delta x)}(x) > \Theta_{acc}$, we obtain
    \begin{equation*}
        \alpha_+ \leq \cfrac{\alpha \Theta_{des}}{\theta_{R_x(\alpha\delta x)}(x)} \leq \frac{\Theta_{des}}{\Theta_{acc}}\alpha.
    \end{equation*}
    Since $\Theta_{des} < \Theta_{acc}$, this yields $\alpha_+ < C \cdot \alpha \text{ with } C < 1$, so in each step, $\alpha$ is reduced about a fixed factor, until the termination criterion is reached. 
\end{proof}
\begin{bemerkung}
 By the choice of $\Theta_{des} \in (0,1)$ it is possible to adjust how aggressive the step size strategy acts. If $\Theta_{des} \approx 0$, then Newton's method follows the Newton path very closely and takes short steps. If $\Theta_{des}\approx 1$, then larger steps will be taken, but the methods might perform less robustly for highly nonlinear problems. The choices $\Theta_{des}=0.5$ and $\Theta_{acc}=1.1\Theta_{des}$ work well in practice. 
 \end{bemerkung}
 \section{Application to generalized eigenvalue problems}
We will now illustrate our results by an application and a numerical example. For more extensive numerical results and applications we refer to the second part of this paper \cite{weigl2025newton}, where we discuss the application of Newton's method for solving variational problems on manifolds.

Let $A, \, B \in L(X,Y)$ where $(X,\|\cdot\|_X)$ and $(Y,\|\cdot\|_Y)$ are Banach spaces and $\|\cdot\|_X$ is Fr\'echet differentiable on $X\setminus\{0\}$, so that the unit sphere $\mathbb{S}^X := \{ x \in X \mid \|x\|_X = 1\}$ is an embedded submanifold of $X$.  A \emph{generalized eigenvalue problem} consists of finding nonzero vectors $x\in X$ and numbers $\mu \in \mathbb C$ such that 
\[
    Ax = \mu Bx \quad \Leftrightarrow \quad Ax \in \mathrm{span}(Bx). 
\]
Consider the vector bundle $\E$ with base manifold $\Y=Y$, the quotient spaces $E_y = Y / \mathrm{span}(y)$ as fibres, whose elements are denoted by $[v]_y=v+\mathrm{span}(y)\in E_y$, and a vector bundle projection $p: \E \to Y, \; E_y \mapsto y$. Clearly, $0_y=[0]_y=\mathrm{span}(y)$. 
Consider the mapping
\begin{align*}
    F : \mathbb{S}^X &\to \E \\
    x &\mapsto (Bx, [Ax]_{Bx}).
\end{align*}
Then we obtain $y(x) = p(F(x)) = Bx$, and a zero $x\in \mathbb{S}^X$ of $F$ satisfies
\[  
    F(x) = 0_{y(x)} \in E_{y(x)} \Leftrightarrow Ax \in  [0]_{Bx} \subset Y\;  \Leftrightarrow Ax \in \mathrm{span}(Bx).
\]
We will apply Newton's method to $F$ to find a real eigenvalue $\mu \in \R$ (if any exists). Our approach slightly extends the scope of similar methods, which can be found in the literature (cf. e.g. \cite{absil2008optimization,golub2000eigenvalue,sorensen2002numerical}). For simplicity of presentation, we will only describe how to find a single eigenvalue. The presented approach can be straightforwardly carried over to the case of finding multiple eigenvalues by considering vector bundles with fibres $E_W = Y / W$ for an element $W$ of a Grassmann manifold over $Y$.

For the application of Newton's method, a retraction on $\mathbb{S}^X$ is needed to compute the Newton step. For $x\in \mathbb{S}^X$ we use the retraction on the sphere $\mathbb{S}^X$ given by normalization:
\[
    R_x : T_x\mathbb{S}^X \to \mathbb{S}^X, \; \delta x \mapsto \frac{x + \delta x}{\|x+\delta x\|_X}.
\]
Moreover, we need to derive a connection map $Q$ on $\E$ to define the Newton equation
\begin{equation}
    \label{eq:NewtonEquationDual}
    Q_{F(x)}\circ F'(x)\delta x + F(x) = 0_{y(x)},
\end{equation}
which is an equation in the quotient space $E_{y(x)}$.  As we have seen in Lemma \ref{lem:VTconnection} we can derive connection maps by differentiating vector back-transports. We thus have to construct vector transports between quotient spaces
\[
 \VT{y}{\leftarrow}(\eta) : Y / \mathrm{span}(\eta) \to Y/\mathrm{span}(y).
\]
To this end we consider the mapping
\begin{equation}\label{eq:project}
 P(\eta) : Y / \mathrm{span}(\eta) \to Y, \; [v]_\eta \mapsto v-\eta \frac{\eta^*(v)}{\eta^*(\eta)},
\end{equation}
where $v$ is any representative of $[v]_\eta$ and $\eta^*\in Y^* = L(Y, \R)$ has to satisfy $\eta^*(\eta) \neq 0$. A short computation yields that $P(\eta)(v+\alpha \eta)=P(\eta)v$ for all $\alpha \in \R$, so this mapping is well defined, and we also observe $[P(y)v]=[v]$. Then for any choice of $\eta^*$ (which may vary with $\eta$), the mapping
\[
 \VT{y}{\leftarrow}(\eta)[v]_\eta := [P(\eta)v]_y
\]
is a vector back-transport.

Now consider an extension of the fibre component of $F$ given by
\[
    F_Y : \mathbb{S}^X \to Y, \; x\mapsto Ax,
\]
with $F(x)=(y(x),[F_Y(x)]_{y(x)})$ for all $x \in \mathbb{S}^X$.
Differentiating 
$\NE{\VT{y(x)}{\leftarrow}}F({\xi}) = \VT{y(x)}{\leftarrow}(y(\xi))F_Y(\xi) = [P(y(\xi))F_Y({\xi})]_{y(x)}$
with respect to ${\xi}$ at ${\xi} = x$ by using the product rule in the embedding space $Y$ then yields a connection map on $\E$:
\begin{align}
\notag    Q_{F(x)}\circ F'(x)\delta x
    &= \NE{\VT{y(x)}{\leftarrow}}'(F(x))F'(x)\delta x
=  \frac{d}{d\xi}[P(y(\xi))F_Y({\xi})]_{y(x)}\Big|_{\xi=x}\delta x\\
\label{eq:dualConnectionEmbedded}    &= [F_Y'(x)\delta x + P'(y(x))y'(x)\delta x\, F_Y(x)]_{y(x)}
\end{align}
Computing the derivative of $P(\eta)v$ with respect to $\eta$ at $\eta = y$ we obtain by the product rule:
\[
 \left[\frac{d}{d\eta}P(\eta)v|_{\eta = y}\delta y\right]_y =  \left[P'(y)\delta y \,v\right]_y =\left[-\delta y \frac{y^*(v)}{y^*(y)}-y\left(\dots \right)\right]_y = \left[-\delta y \frac{y^*(v)}{y^*(y)}\right]_y.
\]
The term $y(\dots)$ vanishes, taking quotients in $Y/\mathrm{span}(y)$. 

For the implementation of the Newton equation we thus need the derivative of $F_Y : X\to Y$ which is in our example simply given by $F_Y'(x)\delta x = A\delta x$.
Inserting this into \eqref{eq:dualConnectionEmbedded} and using $y'(x)\delta x = B\delta x$, we finally obtain our derivative as follows:
\begin{equation}
\label{eq:covariantDerivativeExample}
Q_{F(x)}\circ F'(x)\delta x = \left[A\delta x-\frac{y^*(Ax)}{y^*(Bx)} B\delta x\right]_{Bx},
\end{equation}
where we still have to choose $y^*$, depending on $x$. An appropriate choice depends on the underlying problem structure:
\begin{itemize}
 \item If $(Y,\langle \cdot,\cdot\rangle_Y)$ is a Hilbert space and $B$ is injective, then the choice $y^*:=\langle Bx,\cdot\rangle_Y$ yields an algorithm for solving generalized eigenvalue problems without any requirements on the symmetry of $B$. 
 We obtain
 \[
  Q_{F(x)}\circ F'(x)\delta x = \left[A\delta x-\frac{\langle Bx,Ax\rangle_Y}{\langle Bx,Bx\rangle_Y} B\delta x\right]_{Bx}.
 \]
 \item In structural mechanics we encounter the case $Y=X^*$, where $B : X \to X^*$ is symmetric positive definite, i.e. $Bv(w)=Bw(v)$ and $Bv(v)>0$ for all $v\neq 0$. Using the canonical embedding $\iota : X\to X^{**}$, given by $\iota x(x^*) := x^*(x)$ we may choose $y^* =\iota x$ and obtain
 \[
  Q_{F(x)}\circ F'(x)\delta x = \left[A\delta x-\frac{Ax(x)}{Bx(x)} B\delta x\right]_{Bx}.
 \]
Frequently, $X$, equipped with the inner product $\langle v,w\rangle_B :=Bv(w)$ is a Hilbert space. Then our algorithm can run on the $B$-unit sphere, implying $Bx(x)=1$ for all iterates. In this case we have
 \[
  Q_{F(x)}\circ F'(x)\delta x = \left[A\delta x-Ax(x)\,B\delta x\right]_{Bx}.
 \]
 
 \item For the standard eigenvalue problem $X=Y=(\R^n,\langle \cdot,\cdot\rangle_2)$ and $B=I$, and thus $y(x)=x$, we may choose $y^*=x^T$, so that \eqref{eq:project} at $\eta = x$ becomes the orthogonal projection $P(x) : X \to x^\perp=T_x\mathbb S^X$ and 
 \[
  Q_{F(x)}\circ F'(x)\delta x = \left[A\delta x-x^TAx\,\delta x\right]_x.
 \]
Then our approach is equivalent to the algorithm, presented in \cite[6.4.4]{absil2008optimization}, where a zero of the vector field $\xi(x) := P(x)Ax$ is computed by Newton's method. 
\end{itemize}

\paragraph{Implementation.} For the computation of the Newton direction $\delta x$ we have to solve the Newton equation \eqref{eq:NewtonEquationDual} while taking into account that $\delta x \in T_x\mathbb{S}^X\subset X$, which we assume to be a Hilbert space with inner product $\langle\cdot,\cdot\rangle_X$.  In the following we consider a formulation of the Newton equation as an equation in $Y$. 
If $Bx\neq 0$, there exists a unique $\lambda\in \R$ such that 
\begin{align*}
         F_Y'(x)\delta x + P'(y(x))\delta x\,F_Y(x) - \lambda Bx + F_Y(x) &= 0 \text{ in } Y \quad \text{and} \quad  \langle x, \delta x\rangle_X = 0\\
          \Leftrightarrow \quad [F_Y'(x)\delta x + P'(y(x))\delta x\,F_Y(x) + F_Y(x)]_{y(x)} &= [0]_{y(x)} \subset Y \quad \text{and} \quad  \langle x, \delta x\rangle_X = 0\\
        \Leftrightarrow \quad  Q_{F(x)} \circ F'(x)\delta x + F(x) &= 0_{y(x)} \text{ in } E_{y(x)}\quad \text{and} \quad  \delta x \in T_x\mathbb{S}^X
\end{align*}
For our numerical example we consider $X=Y=\R^n$ equipped with the euclidean inner product $\langle \cdot, \cdot \rangle_2$. In our example we use the matrix representation of \eqref{eq:covariantDerivativeExample} and the representations of the tangent spaces and fibres. Then, we can write the Newton equation as a saddle point system:
\begin{equation} 
    \label{eq:NewtonEquationGenEig}
            \begin{pmatrix}
                A-\frac{\langle Bx, Ax\rangle_2}{\langle Bx,Bx\rangle_2}B & -Bx\\
                x^T & 0
            \end{pmatrix}
            \begin{pmatrix}
                \delta x \\ \lambda
            \end{pmatrix}
            + \begin{pmatrix}
                Ax \\ 0
            \end{pmatrix}
            = \begin{pmatrix}
                0 \\ 0
            \end{pmatrix}.
\end{equation}
To increase the numerical stability we use an estimate $\widehat\lambda$ for the Lagrangian multiplier $\lambda$ given by
\[
    \widehat\lambda(x) := \frac{\langle Ax, Bx\rangle_2}{\langle Bx,Bx\rangle_2}
\]
for $x \in \mathbb{S}^X$
 and consider instead the linear system
\begin{equation}
    \label{eq:NewtonEquationGenEigStable}
            \begin{pmatrix}
                A-\widehat\lambda(x)B & -Bx\\
                x^T & 0
            \end{pmatrix}
            \begin{pmatrix}
                \delta x \\ \delta\lambda
            \end{pmatrix}
            + \begin{pmatrix}
                Ax - \widehat\lambda(x) Bx \\ 0
            \end{pmatrix}
            = \begin{pmatrix}
                0 \\ 0
            \end{pmatrix}.
\end{equation}
Then, in the absence of round-off error, \eqref{eq:NewtonEquationGenEig} and \eqref{eq:NewtonEquationGenEigStable} provide the same solution for the Newton direction $\delta x$. The direction $\delta \lambda \in \R$ is just an auxiliary variable, not used in the iteration. As $Ax - \widehat\lambda(x)Bx$ tends to zero during the iteration, the estimate $\widehat\lambda(x)$ also gives an approximation of a generalized eigenvalue.

Algorithm~\ref{GlobalerNewton} is implemented in the programming language \texttt{Julia}~\cite{BezansonEdelanKarpinskiViral:2017} using the software library \texttt{Manifolds.jl}~\cite{Manifolds}, which provides abstractions for nonlinear manifolds. This work was performed in cooperation with Ronny Bergmann, see also our companion paper~\cite{weigl2025newton}, and it is planned to make the code available in the framework of \texttt{Manopt.jl}\cite{Bergmann2022}.

As an illustrating example we choose $n=101$ and $A,B \in \R^{n\times n}$ with
\[
        A_{ij} := \begin{cases}
            i & \text{if } i=j \\
            1 & \text{else}
        \end{cases}, \qquad B_{ij} := \begin{cases}
            -1 & \text{if } i > j \\
            \phantom{-}0 & \text{else}
        \end{cases}.
\]
Then, we solve the generalized eigenvalue problem $Ax = \mu Bx$ by applying Newton's method with damping (cf. Alg. \ref{alg:DampedNewton} with $\Theta_{des} = 0.5$, $\Theta_{acc} = 1.1\Theta_{des}$, $TOL = 10^{-12}$) to the corresponding mapping $F$. As initial guesses we choose the first unit vector $x_0 = \begin{pmatrix}
    1, \, 0, \, \dots, \, 0 
\end{pmatrix}^T$.
Figure~\ref{fig:ConvergenceAndStepsizes} shows the local superlinear convergence that we expected to see due to Prop. \ref{pro:localconvergenceAI}, and the stepsizes chosen by the affine covariant damping strategy. 
\begin{figure}[tbp]
    \begin{minipage}[t]{0.49\textwidth}
        \begin{tikzpicture}
        \begin{axis}[
            xlabel={Iteration},
            ylabel={$\Vert \delta x\Vert$},
            ymode=log,
            grid=both,
            width=12cm,
            height=8cm,
            scale=0.55,
            xtick={1,2,3,4,5,6,7,8,9,10,11,12,13},
            ytick distance=10^4
        ]
                \addplot table [
            col sep=comma,
            header=false,
            x expr=\coordindex+1,  
            y index=0            
            ]{data/norm_newton_direction_eigenvalues.csv};
        \end{axis}
        \end{tikzpicture}
    \end{minipage}
    \begin{minipage}[t]{0.49\textwidth}
                \begin{tikzpicture}
        \begin{axis}[
            xlabel={Iteration},
            ylabel={$\alpha$},
            grid=both,
            width=12cm,
            height=7.5cm,
            scale=0.6,
            xtick={1,2,3,4,5,6,7,8,9,10,11,12,13},
            ytick distance=0.1
        ]
                \addplot table [
            col sep=comma,
            header=false,
            x expr=\coordindex+1,
            y index=0             
            ]{data/stepsize_eigenvalues.csv};
        \end{axis}
        \end{tikzpicture}
        \label{fig:stepsizesRod}
    \end{minipage}
    \caption{Superlinear convergence of Newton's method (left) and damping factors $\alpha$ chosen by affine covariant damping strategy (right).}
    \label{fig:ConvergenceAndStepsizes}
\end{figure}

\section{Conclusion and future research}

In our study of Newton's method for a mapping $F:\X \to \E$ from a manifold into a vector bundle we have found a number of structural insights. The most basic distinction from the classical case is the need for retractions $R_x$ on $\X$ and connections $Q_e$ on $\E$ to render the Newton steps well defined. Together with a geometric version of Newton-differentiability this already allows a local convergence theory of Newton's method. A Banach type version of a Riemannian metric can be used as a flexible framework to formulate superlinear convergence. 

For the development of further algorithmic strategies, like monitoring local convergence or globalization, vector back-transports $\VT{y}{\leftarrow}$ on $\E$ are required. They make it possible to compare residuals from different fibres and to compute simplified Newton steps. We propose an affine covariant globalization scheme, which works purely with quantities that can be computed in terms of the domain $\X$. In the global regime, where residuals are not small, it is necessary that the employed connection is consistent with the vector back-transport in order to guarantee that the Newton direction is tangential to the algebraic Newton path. 
Our general approach can be used to tackle various classes of problems numerically, posed on manifolds of finite and infinite dimension and thus opens the door for future research. Possible applications are the simulation and optimal control of geometric variational problems and differential equations, problems of stationary action, or shape optimization. Depending on the structure of the problem, alternative globalization schemes can be devised, for example, residual based schemes or descent methods from nonlinear optimization.

\bibliographystyle{plain}
\bibliography{references}
\end{document}